\newtheorem{theorem}{Theorem}[section]
\newtheorem{corollary}[theorem]{Corollary}
\newtheorem{lemma}[theorem]{Lemma}
\newtheorem{proposition}[theorem]{Proposition}
\theoremstyle{definition}
\newtheorem{definition}[theorem]{Definition}
\theoremstyle{remark}
\newtheorem{remark}[theorem]{\sc Remark}
\newtheorem{example}[theorem]{\sc Example}
\numberwithin{equation}{section}
\newcommand{\Sing}{{\rm{Sing\hspace{1pt}}}}
\newcommand{\Disc}{{\rm{Disc\hspace{1pt}}}}
\newcommand{\im}{{\rm{Im}}}
\newcommand{\grad}{\mathop{\rm{grad}}}
\newcommand{\gradx}{\mathop{\rm{grad}_{x}}}
\newcommand{\e}{\varepsilon}
\newcommand{\m}{\setminus}
\newcommand{\ord}{{\rm{ord}}}
\newcommand{\fin}{\hspace*{\fill}$\Box$\vspace*{2mm}}
\newcommand{\cQ}{{\mathcal Q}}
\newcommand{\cS}{{\mathcal S}}
\newcommand{\cW}{{\mathcal W}}
\newcommand{\bK}{{\mathbb K}}
\newcommand{\bR}{{\mathbb R}}
\newcommand{\bC}{{\mathbb C}}
\newcommand{\bP}{{\mathbb P}}
\newcommand{\bN}{{\mathbb N}}
\begin{document}
\title[Deformations with fibre constancy]{Deformations with fibre constancy}

\author{\sc Ying Chen}
\address{School of Mathematics and Statistics, HuaZhong University of Science and Technology WuHan 430074, P. R. China}
\email{ychenmaths@hust.edu.cn}

\author{\sc Cezar Joi\c{t}a}
\address{Institute of Mathematics of the Romanian Academy, P.O. Box 1-764,
 014700 Bucharest, Romania.} 
\email{Cezar.Joita@imar.ro}

\author{Mihai Tib\u{a}r}
\address{Univ. Lille, CNRS, UMR 8524 -- Laboratoire Paul Painlev\'e, F-59000 Lille,
France}
\email{mihai-marius.tibar@univ-lille.fr}

\subjclass[2020]{14B07, 14D06, 32S55,  32C18}




\keywords{deformations of real maps, fibrations,  composed maps}

\begin{abstract}
For studying the local topology of maps, one uses deformations which split the singularities into simpler ones while preserving the general fibres.
We give conditions under which such conservation holds.
  \end{abstract}


\maketitle

\section{Introduction}



Our aim is to characterise deformations of analytic maps $F_{t}(x)$ of parameter $t\in \bR$ close to 0 for which the general fibres of $F_{0}$ are preserved as global fibres within a fixed  ball neighbourhood of the origin. 
  This property will be called here \emph{fibre constancy},  see more precisely Definition \ref{d:topconstantaway} below. 
  
  Such deformations allows one to capture the topology of the general fibres of  $F_{0}$ by splitting the singular locus of $F_{0}$ into simpler singularities of $F_{t}$. 
This method has been brought to light by Brieskorn \cite{Br} in the case of holomorphic functions with isolated singularities, see also \cite{JiT}, and more interestingly,  by Siersma and his school in the case of holomorphic functions with non-isolated singularities under the name of ``admissible deformations'', see e.g. \cite{Si},  \cite{Sc}, \cite{dJ}, \cite{Pe}, \cite{Za},  \cite{Fe}, and some more recent papers such as \cite{FM}, \cite{ST-mildefo}, \cite{MPT}.
Two very recent papers have been dedicated to establishing various conditions which insure the ``fibre constancy''    for complex functions \cite{Ho}, and for both real and complex functions \cite{JST}. 


\medskip

We address here the general case of  map germs, especially $m\ge2$, in both real and complex settings, and we find several new conditions insuring the fibre constancy. 
 
To set the notations, let
$F:(\bK^n\times\bK, 0)\to(\bK^m, 0)$, $n\ge m\ge 1$, $\bK = \bR$ or $\bC$,  be a $\bK$-analytic map germ  regarded as a one-parameter deformation $F_{t}(x)$ of the map germ $F_{0}(x):=F(x ,0)$. We consider the associated map  germ $\widetilde F = (\bK^n\times\bK, 0)\to(\bK^m\times \bK, 0)$,  $\widetilde F(x,t) := \bigl( F(x,t), t\bigr)$. 

For the simplicity, we fix $\bK = \bR$  in the next definition.  Let $\rho$ denote the square of the Euclidean distance function in $\bR^{n+1}$. 
\begin{definition}[Tame deformations]\label{d:tamedefo}\ \\
 We say that $F_{t}(x) = F(x,t)$  is a \emph{tame deformation} of $F_{0}$ if the following condition holds
 (called \emph{$\rho$-regularity}, cf Definition \ref{d:tamemap}):
\begin{equation}\label{eq:rhoreg}
M(\widetilde F)  \ \bigcap F_{0}^{-1}(0) \cap \Sing F_{0} \subset \{(0,0)\},
\end{equation}
where $M(\widetilde F)   := \overline{\Sing (\widetilde F,\rho)\m \Sing \widetilde F}$.
\end{definition}

Tame deformations insure that the image of $\widetilde F$ is well-defined as a set germ at $(0, 0)\in \bR^{m}\times \bR$, and that
 the image $\widetilde F (\Sing \widetilde F)$ is a well-defined set germ at $(0, 0)\in \bR^{m}\times \bR$, in which case  one usually calls it \emph{discriminant} and denotes it by $\Disc \widetilde F$ (cf Proposition \ref{t:images}).  
 The  $\rho$-regularity of Definition \ref{d:tamedefo} also implies (cf. \cite[Proposition 4.2]{ART}) the key property that  $\widetilde F$ has a locally trivial fibration outside its discriminant, which one calls  \emph{Milnor-Hamm fibration}, see Definition \ref{d:Mfib}.  

With this preparation at hand, we may now state our announced definition (see also \cite[Definition 1.1]{JST}):

\begin{definition}[Deformations with fibre constancy]\label{d:topconstantaway} \ \\ 
 Let $F_{0}:(\bR^n, 0)\to(\bR^m, 0)$, $n\ge m\ge 2$ be a non-constant analytic map germ.  We say that the deformation $F$ of $F_{0}$ is a \emph{deformation with fibre constancy} if $\widetilde F$ has a Milnor-Hamm fibration. 
\end{definition}

 This tells  the following: if  $B_{r}\subset \bR^{n}\times \bR$ and by $B_{\delta}\subset \bR^{m}\times \bR$ are those balls of radii $r>0$ and $\delta>0$,  centred at the respective origins, which occur in Definition \ref{d:Mfib} for $G:=\widetilde F$, then we have the diffeomorphism of fibres:
  $$B_{r} \cap  F_{0}^{-1}(a) \simeq B_{r} \cap \widetilde F^{-1}(\lambda_{a})$$ 
  for any    
 $a\in (B_{\delta} \cap  \{t=0\}) \m \Disc F_{0}$ and any $\lambda_{a} \in  B_{\delta} \m \Disc \widetilde F$ which belongs to the same connected component\footnote{Let us remark the equality $\Disc F_{0} = \{t=0\} \cap \Disc \widetilde F$.} of $B_{\delta} \m \Disc \widetilde F$ as $a$. In particular, the fact that $\widetilde  F$ has  a locally trivial fibration over the complement of the discriminant $\Disc \widetilde F$ implies that $F_{0}$ has a locally trivial fibration over the complement of its own discriminant $\Disc F_{0} = \{ t=0\} \cap \Disc \widetilde F$. 
 

 The above
definition tacitly assumes that the involved maps have well defined images and discriminants.  Throughout the paper we will take care that our hypotheses insure this property too.

\ 

Let us briefly explain an application of the above defined fibre constancy.  Let  $F_{0}:(\bC^n, 0)\to (\bC^m, 0)$ define  a \emph{complete intersection} with non-isolated singular locus $\Sing F_{0} := \Sigma_{0}$ of dimension 1, and consider a deformation $F:(\bC^n\times\bC, 0)\to (\bC^m, 0)$ with fibre constancy in the sense of Definition \ref{d:topconstantaway}. Therefore  $F_{0}$ has a Milnor-Hamm fibration, and thus  the topology of its Milnor fibre can be studied by extending the technique developed in the paper \cite{ST-mildefo} for ``admissible deformations'' of a function germ. More precisely, the singular set $\Sing F_{0}$ deforms into $\Sing F_{t}$ which is a disjoint union  of a 1-dimensional singular set $\Sigma_{t}$ and a finite set $P_{t}$ of isolated singularities. Outside a certain finite set $Q_{t}\subset \Sigma_{t}$ of ``special points'',  on  any connected component of $\Sigma_{t}\m Q_{t}$, the map $F_{t}$ has the transversal type of an ICIS  (isolated complete intersection singularity), and the corresponding transversal  Milnor fibre is endowed with a \emph{Milnor monodromy} and  with a \emph{vertical monodromy}\footnote{For all this terminology, the reader is referred to \cite{ST-mildefo} and its included references.}. Then, as described in \cite{ST-mildefo}, these data may be patched together in order to build the homology of the Milnor fibre of $F_{0}$.  

\bigskip

In the general setting of real and complex map germs, this paper  focusses on producing
handy criteria for deformations with fibre constancy in terms of the \emph{partial Thom regularity} (also denoted by \emph{$\partial$-Thom regularity}, cf Definition \ref {d:partialThom}).
We first show that $\partial$-Thom regularity implies $\rho$-regularity, which in turn implies the existence of the Milnor-Hamm fibration needed in Definition \ref{d:topconstantaway}, cf \cite{ART}, \cite{JT}. 

 Section \ref{s:loja} contains our  main results.  We show how to control the key $\partial$-Thom regularity by using inequalities of \L ojasiewicz type for map germs in Theorem \ref{t:deformation}, and by using a Parusi\' nski type inequality in Theorem \ref{t:tame}, respectively. In each case, proofs are new and also radically different with respect to what had been done before in some particular contexts.
 
Section \S\ref{s:ag} treats the composition of deformations in very large generality. We discuss and provide a general answer, cf Theorem \ref{t:Thomcompo} and Example \ref{ex:Thomcompo}, to a problem which occurs in a particular setting in \cite{AG}.

 
 \medskip
\textbf{Acknowledgements.} Ying Chen acknowledges the support from the National Natural Science Foundation of China (NSFC) (Grant no. 11601168). Cezar Joi\c ta and Mihai Tib\u ar acknowledge support from GDRI ECO Math,  from the Labex CEMPI (ANR-11-LABX-0007-01),
and the support of  the project ``Singularities and Applications'' -  CF 132/31.07.2023 funded by the European Union - NextGenerationEU - through Romania's National Recovery and Resilience Plan.

\bigskip
 
\section{Terminology and preliminary results}\label{s:terminology}

Let $G:(\bK^n,0)\rightarrow(\bK^p,0)$ be a $\bK$-analytic map germ, where $\bK$ is $\bR$ or $\bC$. We will denote by $V(G)$ the central fibre $G^{-1}(0)$.

\begin{definition}[$\rho$-regular map]\label{d:tamemap}\ \\
Let $\rho$ denote the square of the Euclidean distance function in $\bK^n$. 
Let 
$$M(G) := \overline{\Sing (G,\rho)\m \Sing G}$$
be the \emph{Milnor set of $G$}.
 We say that $G$  is a \emph{$\rho$-regular map} map germ if:
\begin{equation}\label{eq:rhoreggen}
 M(G)\cap V(G) \cap \Sing G \subset \{(0,0)\}.
\end{equation}
\end{definition}

We have recently considered the question what conditions  insure the tameness in the composition of map germs.

\begin{definition}[$\partial$-Thom regularity]\label{def:stratified thom}\label{d:partialThom} \ \\
	Let $G:(\bK^n,0)\rightarrow(\bK^p,0)$ be a $\bK$-analytic map germ. We say that $G$ is \textit{$\partial$-Thom regular} if there exists a Whitney (a)-stratification $\cW$ of some open ball $B$ centred at $0\in \bK^n$ such that $B\m G^{-1}(G(B\cap \Sing G))$ and $\{0\}$ are strata, that $B\cap V(G)$ and $B\cap V(G)\cap \Sing G$ are unions of strata,  and that  the  pair of strata 
	$$\Bigl(B\m G^{-1}\bigl(G(B\cap \Sing G)\bigr), W\Bigr)$$
satisfies the
	Thom ($a_{G}$)-regularity condition for any  stratum $W\subset B\cap V(G)\setminus\{0\}$.
\end{definition}

Comparing to \cite[Definition 5.7]{JT}, we observe that the above definition is a particular case of 
the $\partial$-Thom regularity used in \cite{JT} in the setting where the singular locus itself has a stratification and one deals with a singular fibration.

\subsection{The image problem for map germs}
The image of a map germ is not necessarily well-defined as a set germ. We refer to \cite{ART}, \cite{JT}, \cite{JT2}, \cite{JT3} for details, examples, and recent results.

Let us first recall the following notion: for $U,V \subset \bK^{n}$ subsets containing the origin, the set germs $(U, 0)$ and $(V, 0)$ are equal if and only if there exists some open ball $B_{\varepsilon} \subset \bK^{n}$ centred at $0$ of radius $\varepsilon > 0$ such that  $ U\cap B_{\varepsilon} = V \cap B_{\varepsilon}$.

\begin{definition}\cite[Definition 2.2]{ART}, \cite{JT}\label{d:setgerm}
Let $G:(\bK^n,0)\rightarrow(\bK^p,0)$, $n \geq p > 0$, be a continuous map germ, where $\bK$ is $\bR$ or $\bC$. We say that the image by $G$ of a set $K \subset \bK^n$ containing $0$ is a well-defined set germ at $0 \in \bK^p$ if  the set germ
$\bigl(B_{\e}\cap G(K), 0\bigr)$ is independent of the small enough radius $\e >0$.
\end{definition}


\begin{proposition}\label{t:images}
Let $G:(\bK^n,0)\rightarrow(\bK^p,0)$ be a $\bK$-analytic map germ which is $\partial$-Thom regular. Then 
$G$ is $\rho$-regular, and
 the images  $\im G$ and $G(\Sing G)$ are well-defined as set germs at $0\in \bK^p$, cf Definition \ref{d:setgerm}.
 \fin
\end{proposition}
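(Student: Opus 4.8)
The plan is to prove the two assertions in sequence, first that $\partial$-Thom regularity implies $\rho$-regularity, and then that $\rho$-regularity (together with the stratification supplied by $\partial$-Thom regularity) forces the two images in question to be well-defined set germs. For the first implication, I would argue by contradiction: suppose there is a sequence $z_k \to 0$ in $M(G) \cap V(G) \cap \Sing G$ with $z_k \neq 0$. By the curve selection lemma one may replace the sequence by a real-analytic arc $z(s)$, $s \in [0,\varepsilon)$, with $z(0)=0$, lying (away from $0$) in $\Sing(G,\rho) \setminus \Sing G$, and with $z(s) \in V(G) \cap \Sing G$ in the limit. Along such an arc the classical Milnor-set argument gives $\frac{\d}{\d s}\rho(z(s)) \neq 0$ for small $s>0$, so $\rho \circ z$ is strictly monotone. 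Now I would use the Whitney (a)-regularity of the pair consisting of the big open stratum $S_0 := B \setminus G^{-1}(G(B \cap \Sing G))$ and a stratum $W \subset B \cap V(G) \setminus \{0\}$ containing $z(s)$ for small $s$: the limiting tangent directions of $S_0$ along $z(s)$ contain $T_{z(s)}W$, and the Thom $(a_G)$ condition forces the tangent space to the nearby fibre of $G$ through points of $S_0$ to contain $T_{z(s)}W$ in the limit as well. Combining this with the definition of the Milnor set (the gradient of $\rho$ lies in the span of the rows of $dG$) yields that $z'(s)$ is tangent to $W$, hence tangent to the level sets of $\rho$ restricted to $W$, contradicting the monotonicity of $\rho\circ z$. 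This is essentially the standard "Thom regularity kills the Milnor set on the zero fibre" argument, and I expect it to be the technical heart of the proof.

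For the image statements, I would exploit the stratification $\cW$ of the ball $B$ directly. For $G(\Sing G)$: the set $B \cap \Sing G$ is a union of strata by hypothesis, each stratum being a connected analytic submanifold whose closure contains $0$ (after shrinking $B$, by local conic structure / the fact that we may take $B$ small). The restriction of $G$ to each such stratum is analytic, and I would invoke the local conic structure of analytic germs together with $\rho$-regularity to see that $G(S)$ meets every small sphere $\partial B_\varepsilon$ in $\bK^p$ in a "stable" way; concretely, for $\varepsilon$ small enough the germ $(B_\varepsilon \cap G(S), 0)$ is independent of $\varepsilon$ because $G|_S$ has no "vanishing at infinity within the germ" phenomenon — this is exactly what Definition \ref{d:setgerm} asks, and it is guaranteed once one knows the fibres of $G$ restricted to the relevant strata behave tamely near $0$, which $\rho$-regularity provides. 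A clean way to organize this is: show that for each stratum $S$, $\overline{G(S)} \setminus \{0\}$ does not accumulate only at $0$ along sequences that escape every $B_\varepsilon \cap G(S)$; the curve selection lemma plus $\rho$-regularity rules this out. Taking the (finite) union over the strata composing $\Sing G$ gives that $G(\Sing G) = G(B \cap \Sing G)$ is a well-defined set germ.

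For $\im G$, I would run the same argument but now using the big stratum $S_0 = B \setminus G^{-1}(G(B \cap \Sing G))$, which is itself a union of strata by hypothesis, together with $V(G)$ and $\Sing G$ which are also unions of strata; since $B = S_0 \cup (B \cap \Sing G) \cup \bigl(G^{-1}(G(B\cap\Sing G))\setminus \Sing G\bigr)$ up to finitely many strata, and $\im G$ is the union of the images of these pieces, it suffices to treat each. The image of $S_0$ is handled exactly as above (tameness of $G$ on that stratum), the image of $B \cap \Sing G$ was just shown to be a well-defined germ, and the image of the remaining locus is contained in $G(B \cap \Sing G)$ by definition, hence causes no new accumulation. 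Thus $(B_\varepsilon \cap \im G, 0)$ stabilizes for small $\varepsilon$. The main obstacle, as noted, is the first step — extracting from the abstract Whitney (a) / Thom $(a_G)$ conditions the precise tangency needed to contradict monotonicity of $\rho$ along a Milnor arc lying in the zero fibre — and I would lean on the curve selection lemma and on the characterization of $M(G)$ via the relation $\grad \rho \in \operatorname{Row}(dG)$ to make that rigorous.
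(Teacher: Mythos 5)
Your first step --- that $\partial$-Thom regularity forces $\rho$-regularity --- is in substance the paper's own argument: the paper phrases it as transversality of small spheres to the positive-dimensional strata $W\subset V(G)$, transferred to the nearby smooth fibres by the Thom $(a_G)$ condition, and your curve-selection version amounts to the same computation. The write-up needs repair, though: an arc in $M(G)\cap V(G)\cap \Sing G$ lies only in the \emph{closure} of $\Sing(G,\rho)\m\Sing G$ (its points are singular points of $G$), so it cannot ``lie in $\Sing(G,\rho)\m\Sing G$ away from $0$''; at each arc point $z(s)$ you must take a sequence of genuine Milnor-set points in the open stratum $S_0=B\m G^{-1}(G(B\cap\Sing G))$, pass to a limit $T$ of fibre tangent spaces, and combine Thom $(a_G)$ ($T\supset T_{z(s)}W$) with the sphere tangency $T_xG^{-1}(G(x))\subset T_xS_{\|x\|}$ ($T\subset T_{z(s)}S_{\|z(s)\|}$) to conclude that $z(s)$ is a critical point of $\rho|_W$; since $z'(s)\in T_{z(s)}W$, this gives $(\rho\circ z)'\equiv 0$, contradicting the strict monotonicity of $\rho$ along an analytic arc emanating from $0$ (which is automatic, not a ``Milnor-set argument''). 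Also, Whitney $(a)$ for the pair $(S_0,W)$ is vacuous because $S_0$ is open; the only condition doing work there is Thom $(a_G)$.

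The genuine gap is in the second half. The paper does not re-prove that $\im G$ and $G(\Sing G)$ are well-defined set germs: it imports this from \cite[Theorem 4.5 (a)]{JT}, and that is a non-trivial theorem (the ``local image problem''), not a formal consequence of the definitions. Your sketch replaces it by the assertion that ``curve selection plus $\rho$-regularity rules out'' values escaping every small ball, without giving the mechanism, and the stratum-by-stratum reduction fails where it matters most: for a stratum $S\subset \Sing G$, the $\rho$-regularity condition \eqref{eq:rhoreggen} constrains only the closure of the set of \emph{regular} points of $G$ at which the fibres are tangent to spheres, intersected with $V(G)\cap\Sing G$; it says nothing about the restriction $G|_{\Sing G}$, whose fibres consist of singular points. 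Hence the well-definedness of $G(\Sing G)$ --- the discriminant, which is the harder of the two claims and is indispensable for the rest of the paper --- does not follow from anything you have established; one needs either the actual argument of \cite{JT} (which controls, via transversality of small source spheres to the relevant fibres, the fibre components that could disappear as the radius shrinks) or, as the paper does, an explicit appeal to that result. As written, your second half restates the desired conclusion (``the germ stabilizes because no component vanishes, which $\rho$-regularity provides'') rather than proving it.
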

\begin{proof}
The existence of a $\partial$-Thom stratification on some open ball  $B$ 
 implies that there is  $R>0$ such that,  for any positive $r\le R$, the sphere $S_{r}\subset \bK^{n}$ is transversal to all  positive dimensional strata $W\in\cW$ such that $W\subset V(G)$. It follows that
   the sphere $S_{r}$ is transversal to the smooth nearby fibres of $G$, and therefore $G$ is $\rho$-regular.  
   
The  $\rho$-regularity implies that $\im G$ and $G(\Sing G)$ are well-defined set germs, as shown  in \cite[Theorem 4.5 (a)]{JT}.
\end{proof}

\noindent \textbf{Notation}.  If  the image $G(\Sing G)$ is a well-defined set germ at the origin, then we will denote it by $\Disc G$, and usually call it  ``the discriminant of $G$''. We say that the map germ $G$ is \emph{nice} if it has well defined
image and discriminant as set germs at $0\in \bK^p$.

\begin{definition}[\emph{Milnor-Hamm fibration}]\label{d:Mfib}
Let $G: (\bR^n,0) \rightarrow (\bR^p,0)$ be a non-constant nice analytic map germ. We say that $G$ has \emph{Milnor-Hamm fibration} if, for any $\varepsilon>0$ small enough, there exists $0<\delta\ll\varepsilon$ such that the restriction:
\begin{equation}\label{eq:Mfib}
		G_{|}:B^{n}_{\varepsilon}\cap G^{-1}(B^{p}_{\delta}\m \Disc G)\rightarrow B^{p}_{\delta}\m \Disc G
\end{equation}
is a $C^{\infty}$ locally trivial fibration over each connected component of $B^{p}_{\delta}\m \Disc G$, such that it is independent of the choice of $\varepsilon$ and $\delta$, up to diffeomorphisms.
\end{definition}
 
  It has been shown in \cite[Proposition 4.2]{ART} that: \emph{if $G$ is $\rho$-regular then $G$ has a Milnor-Hamm fibration}. By  Proposition \ref{t:images}, we then get the following consequence:
  \begin{corollary}\label{c:milnorfib}
If $G:(\bK^n,0)\rightarrow(\bK^p,0)$ is a $\bK$-analytic map germ which is $\partial$-Thom regular, then
$G$  has a  Milnor-Hamm tube fibration. 
\fin
\end{corollary}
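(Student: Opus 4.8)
The plan is simply to chain the two results already obtained in this section. First I would invoke Proposition~\ref{t:images}: the hypothesis that $G$ is $\partial$-Thom regular yields at once that $G$ is $\rho$-regular and that the images $\im G$ and $G(\Sing G) = \Disc G$ are well-defined set germs at $0\in\bK^p$, i.e.\ that $G$ is \emph{nice} in the sense recorded right after that proposition. This initial step is not a formality: Definition~\ref{d:Mfib} of the Milnor-Hamm fibration only makes sense for nice map germs, since the base $B^p_\delta\m\Disc G$ of the fibration must be a legitimate set germ; so one first has to check that $\partial$-Thom regularity places us in the setting where the conclusion can even be stated for $G$.

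Second, with the nice $\rho$-regular map germ $G$ in hand, I would apply \cite[Proposition~4.2]{ART}, which is precisely the statement that a $\rho$-regular nice analytic map germ admits a Milnor-Hamm fibration: for every small $\varepsilon>0$ there is $0<\delta\ll\varepsilon$ for which the restriction \eqref{eq:Mfib} is a $C^\infty$ locally trivial fibration over each connected component of $B^p_\delta\m\Disc G$, independent of the choices of $\varepsilon$ and $\delta$ up to diffeomorphism. Combining the two steps gives the corollary, the ``tube'' qualifier referring to the fact that the base is the punctured neighbourhood $B^p_\delta\m\Disc G$ of the discriminant as in Definition~\ref{d:Mfib}.

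There is no genuine obstacle here — the content has been front-loaded into Proposition~\ref{t:images} and into the cited \cite[Proposition~4.2]{ART}. The only point requiring attention is the convention in the complex case $\bK=\bC$: one reads $\rho$, the spheres $S_r$ and the balls $B^n_\varepsilon$, $B^p_\delta$ with respect to the underlying real Euclidean structures under the identifications $\bC^n\cong\bR^{2n}$ and $\bC^p\cong\bR^{2p}$; both Proposition~\ref{t:images} and \cite[Proposition~4.2]{ART} are phrased with this identification, so no additional argument is needed and the corollary follows immediately.
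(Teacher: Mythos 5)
Your argument is exactly the paper's: the corollary is stated with no separate proof precisely because it follows by combining Proposition~\ref{t:images} (which gives $\rho$-regularity and the well-defined image and discriminant) with \cite[Proposition~4.2]{ART}, just as you do. The proposal is correct and takes essentially the same route.
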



\section{$\partial$-Thom regularity of deformations}\label{s:loja}



Let $\psi : (\bR^n, 0) \to (\bR^m, 0)$ be some analytic map germ, $n \ge m \ge 2$, and let $V := \psi^{-1}(0)$.

We discuss here several ways of checking the partial Thom regularity that we assume in the statement of the main theorem.

The idea is to control the growth of functions by inequalities.


\subsection{Thom regularity via the \L ojasiewicz inequality}\label{ss:classicandnew}
To prove that the Thom (a$_f$)-regularity holds for functions $f$, Hamm and L\^ e showed in \cite{HL} how to use the existence of the \L ojasiewicz inequality for $\bK$-analytic function germs $f: (\bK^{n}, 0) \to (\bK, 0)$, namely: there exists some $0< \theta < 1$, such that for any $x$ in some neighbourhood of $0$ one has
\begin{equation}\label{eq:lo}
 \| f(x) \|^{\theta} \le \| \grad f (x) \| ,
\end{equation}
where $\grad f (x)$ denotes here the conjugate of the complex gradient. 

 In the setting of real analytic map germs $\psi : (\bR^{n}, 0) \to (\bR^m, 0)$,  Hamm and L\^e's method of proof may still work when assuming a \L ojasiewicz type inequality.   Massey in \cite{Ma}  points out that if the following condition:
\begin{equation}\label{eq:lomassey}
 \| \psi (x) \|^{\theta} \le  K \nu(x),
\end{equation}
holds for some $0< \theta < 1$ and some $K>0$ in some neighbourhood of $0$, where:
\begin{equation}\label{eq:distance} 
 \nu(x) := \min_{\| a\| = 1}  \| \sum_i a_i \grad \psi_i (x)\|
\end{equation}
is the Rabier distance function, cf \cite{Ra}.  Massey shows that this implies the existence of a Thom (a$_{\psi}$)-regular stratification of  $(V, 0)$.

\subsection{\L ojasiewicz type inequality in case of deformations of maps}\

Let  $F:(\bR^n\times\bR, 0)\to(\bR^m, 0)$ be an analytic map germ viewed as one-parameter deformation $F_{t}(x) := F(x,t) = (f_{1}(x,t),\dots,f_{m}(x,t))$, and let $\widetilde F (x,t) := (F(x,t), t)$ be the associated map germ.

Let us  set the notation:
\begin{equation}\label{eq:niu}
  \nu_{F_{t}}(x) := \min_{\| a\| = 1}  \Bigl\| \sum_{i=1}^{m} a_i \gradx f_i (x,t) \Bigr\| ,
\end{equation}
where $\gradx f_{i} := \bigl(\overline{\frac{\partial f_{i}}{\partial x_{1}}}, \ldots , \overline{\frac{\partial f_{i}}{\partial x_{n}}} \bigr)$ thus contains the partial derivatives with respect to the variables $x$ only.

 We show a condition under which $\widetilde F$ is $\partial$-Thom regular.
\begin{theorem}\label{t:deformation}
Let $F_{t}(x) = F(x,t)$ be a deformation of the map germ $F_{0}:=F(x,0) : (\bR^n, 0)\to(\bR^m, 0)$. Assume that the following condition holds:

\begin{equation}\label{eq:cond1}
  \left\{ \begin{aligned}
\quad &  \mbox{ There exists }  0<\theta<1 \mbox{ such that for any } x_{0}\in F^{-1}(0)\cap \Sing F_{0}\m\{0\} \\ 
\quad &  \mbox{ there is a constant } c(x_{0}) >0   \mbox{ for which the following inequality holds: }\\
\quad & \|F(x,t)\|^{\theta}\leq c(x_{0}) \nu_{F_{t}}(x)   \  \mbox{ when } (x,t) \to (x_{0},0), \ (x,t) \notin \Sing \widetilde F .
   \end{aligned}\right.
\end{equation}

Then the associated map germ $\widetilde F$ is $\partial$-Thom regular. 
\end{theorem}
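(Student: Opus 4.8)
The plan is to exhibit a Whitney stratification of a small ball $B\subset\bR^{n+1}$ fulfilling Definition \ref{d:partialThom} for $G=\widetilde F$. Most of its requirements are routine; the real content will be to derive the Thom $(a_{\widetilde F})$-regularity along the ``deepest'' strata from \eqref{eq:cond1}, in the spirit in which Hamm--L\^e \cite{HL} and Massey \cite{Ma} use a \L ojasiewicz-type inequality.

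First I would set up the stratification. Choose $B$ with $\overline B$ inside the domain of $F$, so that $\widetilde F_{|\overline B}$ is proper; then $\widetilde F(B\cap\Sing\widetilde F)$, and hence $\widetilde F^{-1}\bigl(\widetilde F(B\cap\Sing\widetilde F)\bigr)\cap B$, is subanalytic, and by the existence of subanalytic Whitney (b)-stratifications adapted to a finite family of subanalytic sets there is a stratification $\cW$ of $B$ in which $\{0\}$ and $\cU:=B\setminus\widetilde F^{-1}\bigl(\widetilde F(B\cap\Sing\widetilde F)\bigr)$ are strata, $B\cap V(\widetilde F)$ and $B\cap V(\widetilde F)\cap\Sing\widetilde F$ are unions of strata, and (shrinking $B$) the stratification is conical at $0$. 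Since $\widetilde F(x,t)=(F(x,t),t)$, one has $\rank d\widetilde F_{(x,t)}=1+\rank d_{x}F_{t}(x)$, whence $V(\widetilde F)=V(F_{0})\times\{0\}$, $\Sing\widetilde F=\bigcup_{t}(\Sing F_{t})\times\{t\}$ and $V(\widetilde F)\cap\Sing\widetilde F=\bigl(V(F_{0})\cap\Sing F_{0}\bigr)\times\{0\}$. Thus it remains only to verify the Thom $(a_{\widetilde F})$-regularity of each pair $(\cU,W)$ with $W\subset B\cap V(\widetilde F)\setminus\{0\}$; and for a point $(x_{0},0)$ with $x_{0}\in V(F_{0})\setminus\Sing F_{0}$ the rank formula gives $\rank d\widetilde F_{(x_{0},0)}=m+1$, so $\widetilde F$ is submersive near $(x_{0},0)$ and there nothing has to be checked. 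Hence I may assume $W\subset\bigl(V(F_{0})\cap\Sing F_{0}\bigr)\times\{0\}\setminus\{0\}$ --- precisely the locus over which \eqref{eq:cond1} is hypothesised.

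Fix such a $W$ and a point $(x_{0},0)\in W$, so $x_{0}\in F^{-1}(0)\cap\Sing F_{0}\setminus\{0\}$. Along $\cU$, the fibre of $\widetilde F$ through $(x,t)$ has tangent space $\ker d_{x}F_{t}(x)\times\{0\}=\Gamma(x,t)^{\perp}\times\{0\}$, where $\Gamma(x,t):=\span\{\gradx f_{1}(x,t),\dots,\gradx f_{m}(x,t)\}$, while $T_{(x_{0},0)}W=V_{W}\times\{0\}$ with $V_{W}\subset\bR^{n}$; so Thom $(a_{\widetilde F})$ for $(\cU,W)$ at $(x_{0},0)$ means: every limit in the Grassmannian of the planes $\Gamma(x_{k},t_{k})$, along regular points $(x_{k},t_{k})\in\cU$ with $(x_{k},t_{k})\to(x_{0},0)$, is orthogonal to $V_{W}$. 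I would prove this by contradiction. Assuming a failure, the curve selection lemma (together with an analytic selection of the relevant unit covector and of its coefficient vector along the arc) yields a real analytic arc $(x(s),t(s))\to(x_{0},0)$ of regular points and a real analytic unit covector $\xi(s)=d_{x}F_{t(s)}(x(s))^{\top}b(s)\in\Gamma(x(s),t(s))$ with $\langle\xi(s),v\rangle\ge\delta>0$ for a fixed unit $v\in V_{W}$ and all small $s>0$. From $1=\|\xi(s)\|\ge\nu_{F_{t(s)}}(x(s))\,\|b(s)\|$ and \eqref{eq:cond1} one gets $\|b(s)\|\le c(x_{0})\,\|F(x(s),t(s))\|^{-\theta}$, while an analytic change of coordinates straightening $W$, followed by Hadamard's lemma applied to $F$ (which vanishes on $W$), shows that $d_{x}F_{t(s)}(x(s))$ restricted to the directions of $W$ is $O\bigl(d((x(s),t(s)),W)\bigr)$. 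Combining,
\[
0<\delta\ \le\ \langle\xi(s),v\rangle\ =\ \bigl\langle b(s),\,d_{x}F_{t(s)}(x(s))\,v\bigr\rangle\ \le\ c(x_{0})\,\|F(x(s),t(s))\|^{-\theta}\,O\bigl(d((x(s),t(s)),W)\bigr),
\]
and the plan is to show that the right-hand side tends to $0$ as $s\to0$, a contradiction. (The degenerate sub-case in which $F$ vanishes identically along the arc is disposed of separately and is easier.)

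I expect this last implication to be the main obstacle. It amounts to comparing, along the selected analytic arc, the vanishing order of $\|F(x(s),t(s))\|$ with that of the distance $d((x(s),t(s)),W)$, so as to force $\|F(x(s),t(s))\|^{-\theta}\,d((x(s),t(s)),W)\to0$; this is the real counterpart of the sharp estimate at the heart of the Hamm--L\^e and Massey proofs. Making it work requires using \eqref{eq:cond1} in its full strength --- for instance together with a \L ojasiewicz inequality bounding $\nu_{F_{t}}$ near $\Sing\widetilde F$, on which it vanishes since $W\subset\Sing\widetilde F$ --- and, most likely, selecting the bad arc carefully (so that it does not approach $W$ inside the thin set where $\|F\|$ is anomalously small), rather than merely bounding the coefficient vector $b(s)$ as above.
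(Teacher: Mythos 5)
Your reduction is fine: indeed $V(\widetilde F)=V(F_{0})\times\{0\}$, $\Sing\widetilde F\cap V(\widetilde F)=(V(F_{0})\cap\Sing F_{0})\times\{0\}$, and only the strata inside this set need the Thom $(a_{\widetilde F})$ check, which is where \eqref{eq:cond1} lives. But the proof has a genuine gap exactly where you flag it, and the gap is not a technicality: the estimate you would need, $d\bigl((x(s),t(s)),W\bigr)=o\bigl(\|F(x(s),t(s))\|^{\theta}\bigr)$ along the bad arc, is simply not a consequence of \eqref{eq:cond1}, and with the crude bounds you use it is false in the most basic situations. Take $m=1$ and $F_{0}=x_{1}^{2}$ near a point of $W\subset\{x_{1}=0\}\m\{0\}$: the \L ojasiewicz condition forces $\theta\ge 1/2$, while $d(\cdot,W)\sim|x_{1}|$ and $\|F\|^{\theta}\sim|x_{1}|^{2\theta}$, so $\|F\|^{-\theta}d(\cdot,W)$ does not tend to $0$ once $\theta>1/2$. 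Thom regularity of course still holds there, but only because the tangential derivative is \emph{much} smaller than the Hadamard bound $O(d(\cdot,W))$ (in this example it is $0$); the product inequality $\langle\xi(s),v\rangle\le\|b(s)\|\cdot O(d(\cdot,W))$ with $\|b(s)\|\le c(x_{0})\|F\|^{-\theta}$ throws away precisely the cancellation that carries the theorem. So the contradiction is never reached, and your closing suggestion ("select the arc carefully", add an extra \L ojasiewicz inequality for $\nu_{F_{t}}$ near $\Sing\widetilde F$) is a restatement of the difficulty rather than a solution: that order comparison is the whole content of the Hamm--L\^e/Massey-type arguments, and it is not supplied.

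The paper circumvents this entirely by a different device. It introduces the auxiliary analytic set $X=\{f_{i}(x,t)=s_{i}^{L},\ i=1,\dots,m\}\subset\bR^{n}\times\bR\times\bR^{m}$, with $L$ chosen so that $\theta<\frac{L-1}{L}$, and invokes Hironaka's theorem to get, for free, a Thom $(a_{\pi})$-stratification of $X$ for the single \emph{function} $\pi(x,t,s)=t$ --- this is where the hard order-comparison is outsourced. The hypothesis \eqref{eq:cond1} is then used only once, in Lemma \ref{l:keylemma}, to show that any limit $T$ of tangent spaces of $\pi$-fibres at regular points tending to $(x_{0},0,0)$ has the product form $A\times\{0\}_{1}\times\bR^{m}$: in the normalized limiting conormals the $s$-components die because $\|s_{i}^{L-1}\|=\|f_{i}\|^{\frac{L-1}{L}}\le\|F\|^{\theta+\e}\le c(x_{0})\,\nu_{F_{t}}(x)\,\|F\|^{\e}$ with $\|F\|^{\e}\to0$. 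A dimension count then upgrades the inclusion $T_{(x_{0},0,0)}W\subset T$ (coming from the Thom $(a_{\pi})$ condition) to $T_{(x_{0},0,0)}W\subset T'$, where $T'$ is the limit of tangents of the $\widetilde F$-fibres $p^{-1}(t,s)\cap X$, which is exactly $(a_{\widetilde F})$-regularity. If you want to salvage your direct conormal/arc approach, you would in effect have to reprove an analogue of this lemma; as written, the key step is missing.
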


\begin{remark}
It may happen that $\widetilde F$ is $\partial$-Thom regular without $F$ being $\partial$-Thom regular.	
  For instance, 
 let $F(x,y,t)=(f_{1}, f_{2}) := (x, y(x^{2}+y^{2})+xt^{2})$, and let 
 $\widetilde F:(\bR^3,0)\to(\bR^{3},0)$, $\widetilde F(x,y,t)=(x, y(x^{2}+y^{2})+xt^{2}, t)$ be the associated map.
   Since $V(\widetilde F)=\{(0,0,0)\}$, the map germ 
 	$\widetilde F$ is $\partial$-Thom regular by definition. According to \cite[Example 5.2]{ART}, $F(x,y,t)=(x, y(x^{2}+y^{2})+xt^{2})$ is not $\partial$-Thom regular. 
\end{remark}

\begin{remark}
The inequality within \eqref{eq:cond1} is a Lojasiewicz type condition. In case of map germs but without reference to deformations,  Massey used such a condition in \cite{Ma} over a full neighbourhood of the origin, and with the restriction $\Sing F\subset F^{-1}(0)$. In the setting of deformations of map germs,  Massey's condition appears to be too rigid, since it does not allow deformations where $\Sing F_{0}$ splits outside the central fibre $F^{-1}(0)$ and which are precisely the object of many papers in the literature, e.g. \cite{Si}, \cite{dJ}, \cite{Pe}, \cite{Za}.

 In contrast, our theorem includes such splittings since it concerns deformations of map germs without restrictions 
on the singular locus; observe that  condition \eqref{eq:cond1} refers only non-singular points $(x,t) \notin\Sing\widetilde F$. 
Our method of proof builds on the idea used by Hamm and L\^{e} in \cite{HL} to prove the \L ojasiewicz classical inequality for analytic functions.
\end{remark}

\begin{proof}
Let $F(x,t)=(f_{1}(x,t),\dots,f_{m}(x,t))$, and let $\widetilde F(x,t):=(F(x,t),t)$. We consider the germ at the origin of the following analytic set of dimension $n+1$:
$$X=\{f_{1}(x,t)-s^{L}_{1}=\dots=f_{m}(x,t)-s^{L}_{m}=0\} \subset \bR^{n}\times \bR\times \bR^{m}$$
where $L\in\bN$ is sufficiently large such that  $0<\theta <\frac{L-1}{L}$.  Let $\cW$ be a Whitney (a)-regular stratification $\cW$ of the set germ $X$, such that $\Sing X$ is a union of strata. The stratified singular locus $\Sing_{\cW} \pi$ of the function germ $\pi:(X,0)\longrightarrow(\bR,0)$, $(x,t,s)\mapsto t$, is a closed analytic closed set, and it is included in the fibre $X\cap \{t=0\}$. As proved by Hironaka, the Whitney stratification $\cW$ may be refined into a Whitney stratification which is also Thom  $(a_{\pi})$-regular and such that $\Sing_{\cW} \pi$ is a union of strata.  This shows that there exists a $\partial$-Thom stratification $\cS$ of $\Sing_{\cW} \pi$, i.e. satisfying the following condition:  \emph{for any stratum $S\in \cS$, the pair $\bigl(X\m (\{t=0\} \cup\Sing X) , S\bigr)$ is Thom  $(a_{\pi})$-regular.}

We consider now the slices $S\cap \{s=0\}$ of all the strata $S\subset \Sing_{\cW} \pi$. Since these slices are not necessarily non-singular, one needs to refine the partition into a Whitney (a)-regular stratification $\cS'$ of $X\cap \{t=0\}\cap \{s=0\}$. Since $\cS'$ is a refinement of $\cS$, it follows that, for any stratum $W\in \cS'$, the  pair
\begin{equation}\label{eq:thom1}
\Bigl(X\m (\{t=0\} \cup\Sing X) , W\Bigr) 
\end{equation}
is Thom  $(a_{\pi})$-regular.

We have the equality $\Sing \widetilde F \cap V(\widetilde F)\times\{0\}^{m} = \Sing X \cap \Sing \pi \cap \{s=0\}$, where $\Sing \pi \subset \{ t=0\}$. 
Thus $\cS'$ is a stratification of the set $\Sing \widetilde F \cap V(\widetilde F)$, and we will show that 
it is a partial Thom stratification of $\widetilde F$, i.e. that the pair  $\bigl(B^{n+1}_{\varepsilon}\m\widetilde F^{-1}(\Disc(\widetilde F), W\bigr)$ is (a$_{\widetilde F}$)-regular for all $W\in \cS'$.

  We consider sequences of points $(x, t)\subset B^{n+1}_{\varepsilon}\m\widetilde F^{-1}(\Disc(\widetilde F))$ such that $x\rightarrow x_{0}$ and $t\rightarrow 0$ where $(x_{0},0)\in\Sing\widetilde F\cap V(\widetilde F)$, and
   such that $(x_{0},0)\in W$ for a positive dimensional stratum $W\in\cS'$.
     Note that for a corresponding triple $(x,t,s)\in X$, the variable  $s$ converges $0\in\bR^{m}$, since $F(x_{0},0)=0$.
     
          By \eqref{eq:thom1},  the inclusion:
\begin{equation}\label{eq:inclu13}
T:=\lim_{(x,t,s)\rightarrow(x_{0},0,0)}T_{(x,t,s)}\bigl(\pi^{-1}(t)\cap X\bigr)\supset T_{(x_{0},0,0)}W
\end{equation}
holds, where $\dim T_{(x_{0},0,0)}W\geq 1$, and by  tacitly assuming that the limit $T$ exists in the appropriate Grassmannian. Note that we have  $\dim T=n$.

\begin{lemma}\label{l:keylemma}
Under the hypotheses of Theorem \ref{t:deformation}, one has the equality:
 $$T = A \times \{0\}_{1} \times \bR^{m}\subset  \bR^{n}\times \bR \times \bR^{m},$$
 where $A \subset \bR^{n}$ is some linear subspace of dimension $n-m$.
\end{lemma}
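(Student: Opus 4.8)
The plan is to determine the limit $T$ through its orthogonal complement. Every tangent space $T_{(x,t,s)}\bigl(\pi^{-1}(t)\cap X\bigr)$ is contained in the hyperplane $\{\d t=0\}\subset\bR^{n}\times\bR\times\bR^{m}$, which I identify with $\bR^{n}\times\bR^{m}$ by dropping the middle coordinate; hence so is $T$, and we already know $\dim T=n$. Therefore it suffices to prove that the orthogonal complement $T^{\perp}$, taken inside $\bR^{n}\times\bR^{m}$, lies in $\bR^{n}\times\{0\}$: then $T^{\perp}$ is an $m$-dimensional linear subspace $B\subset\bR^{n}$, so $T=B^{\perp}\times\{0\}_{1}\times\bR^{m}=A\times\{0\}_{1}\times\bR^{m}$ with $A:=B^{\perp}$ of dimension $n-m$, which is the claim.

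I would first record the local structure of $X$ along the sequences under consideration. There $(x,t)\notin\widetilde F^{-1}(\Disc\widetilde F)$, hence $(x,t)\notin\Sing\widetilde F$ and $\rank\gradx F(x,t)=m$; consequently $X$ is smooth of dimension $n+1$ at $(x,t,s)$, the slice $\pi^{-1}(t)\cap X$ is smooth of dimension $n$ there, and $\nu_{F_{t}}(x)>0$. The normal space of $\pi^{-1}(t)\cap X$ in $\bR^{n}\times\bR^{m}$ is spanned by the restrictions to $\{\d t=0\}$ of the gradients of the defining equations $f_{i}(x,t)-s_{i}^{L}$, i.e.\ by $v_{i}(x,t,s):=\bigl(\gradx f_{i}(x,t),\,-L\,s_{i}^{L-1}e_{i}\bigr)\in\bR^{n}\times\bR^{m}$, $i=1,\dots,m$. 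Since $T$ exists, so does $T^{\perp}=\lim\bigl(T_{(x,t,s)}(\pi^{-1}(t)\cap X)\bigr)^{\perp}$, and each unit vector of $T^{\perp}$ is a limit of unit vectors $\xi=\sum_{i=1}^{m}a_{i}v_{i}(x,t,s)=(\eta,\zeta)$ with $\eta=\sum_{i}a_{i}\gradx f_{i}(x,t)\in\bR^{n}$ and $\zeta=-L\bigl(a_{1}s_{1}^{L-1},\dots,a_{m}s_{m}^{L-1}\bigr)\in\bR^{m}$. Thus everything reduces to showing $\zeta\to 0$.

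The heart is a \L ojasiewicz estimate in the spirit of Hamm and L\^e. Assuming $\xi\ne 0$ (so $a\ne 0$) and $s\ne 0$ (else $\zeta=0$), I would use
$$\|\zeta\|\le L\,\|a\|\,\|s\|^{L-1},\qquad \|\eta\|\ge\|a\|\,\nu_{F_{t}}(x)>0,$$
the last inequality being the definition of $\nu_{F_{t}}$ together with the full rank recorded above. Now $x_{0}\ne 0$, because it belongs to a positive-dimensional stratum of $\cS'$ whereas $\{0\}$ is a stratum, and $x_{0}\in F^{-1}(0)\cap\Sing F_{0}$; hence hypothesis \eqref{eq:cond1} applies along our sequence (which stays off $\Sing\widetilde F$). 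Combined with $F(x,t)=\bigl(s_{1}^{L},\dots,s_{m}^{L}\bigr)$ and $\|F(x,t)\|\ge m^{-L/2}\|s\|^{L}$, it gives
$$\nu_{F_{t}}(x)\ \ge\ \frac{1}{c(x_{0})}\,\|F(x,t)\|^{\theta}\ \ge\ \frac{1}{c(x_{0})\,m^{L\theta/2}}\,\|s\|^{L\theta}.$$
Putting the three displayed estimates together gives $\|\zeta\|/\|\eta\|\le L\,c(x_{0})\,m^{L\theta/2}\,\|s\|^{\,L(1-\theta)-1}$, where the exponent $L(1-\theta)-1$ is strictly positive exactly because $L$ was chosen at the outset with $\theta<\tfrac{L-1}{L}$; hence $\|\zeta\|/\|\eta\|\to 0$ as $(x,t,s)\to(x_{0},0,0)$. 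Since $\|\eta\|^{2}+\|\zeta\|^{2}=1$, this forces $\|\zeta\|\to 0$, so every unit vector of $T^{\perp}$ has vanishing $\bR^{m}$-component, and the reduction of the first paragraph concludes.

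I do not expect a deep obstacle: the proof is essentially differentiation of $f_{i}(x,t)-s_{i}^{L}$ plus the hypothesized inequality. The points needing care are the legitimacy of invoking \eqref{eq:cond1}, i.e.\ that $x_{0}\ne 0$, that $x_{0}\in F^{-1}(0)\cap\Sing F_{0}$, and that the approximating sequence lies in $B^{n+1}_{\varepsilon}\m\widetilde F^{-1}(\Disc\widetilde F)$ hence off $\Sing\widetilde F$, and the bookkeeping of constants when passing from $\|F(x,t)\|$ to powers of $\|s\|$. The one genuinely decisive choice is that of $L$: the inequality $L(1-\theta)>1$ is precisely what lets the $s$-directions survive in the limiting tangent space, which is why $L$ is taken large at the beginning of the proof of Theorem \ref{t:deformation}.
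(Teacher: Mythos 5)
Your proof is correct and follows essentially the same route as the paper: you identify the limit $T$ through its normal directions spanned by the covectors $\bigl(\gradx f_{i},\,-Ls_{i}^{L-1}e_{i}\bigr)$, and use hypothesis \eqref{eq:cond1} together with the choice $\theta<\tfrac{L-1}{L}$ (via $\|s_{i}\|^{L-1}\lesssim\|F(x,t)\|^{\theta+\e}$) to show that the $\bR^{m}$-components of limiting conormal vectors vanish. The only difference is presentational: you work inside the slice $\{\d t=0\}$ with unit conormal vectors and explicit constants, whereas the paper normalizes by $\nu_{F_{t}}(x)$ in the full ambient conormal space; the key estimates coincide.
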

\begin{proof}
The normal space to  $T_{(x,t,s)}\bigl(\pi^{-1}(t)\cap X\bigr)$
is spanned by the vectors
$$V^{i}(x,t,s):=\Bigl( \frac{\partial f_{i}}{\partial x_{1}}(x,t),\dots,\frac{\partial f_{i}}{\partial x_{n}}(x,t), 0,  \dots, 0, Ls_{i}^{L-1},0,\dots,0\Bigr)\in  (\bR^{n}\times \bR \times \bR^{m})^{*}, $$
for $i=1,\ldots , m$.

We consider paths $\gamma(\lambda) :=(x_{\lambda},t_{\lambda},s_{\lambda})\in  \bR^{n}\times \bR \times \bR^{m}$ depending on a parameter $\lambda\in \bR$,
such that  $(x_{\lambda},t_{\lambda},s_{\lambda})\rightarrow(x_{0},0,0)$  when $\lambda \to 0$.

We claim that we obtain the cotangent space $T^{*}$ dual to $T$ by first considering the limits, along all such paths $\gamma(\lambda)$, of all the linear combinations
$\sum_{i=1}^{m} \alpha_{\lambda}^{i} V^{i}(x_{\lambda},t_{\lambda},s_{\lambda})$ viewed as elements in the projective space $\bP(\bR^{n}\times \bR \times \bR^{m})^{*}$, with coefficients $\alpha_{\lambda}^{i}$ depending of the parameter $\lambda$ too, and such that $\| \alpha_{\lambda}\| =1$ for any $\lambda\not= 0$. Finally the full cotangent space $T^{*}$ is the set of all scalar multiples of these limits.

Let us divide $\sum_{i=1}^{m} \alpha_{\lambda}^{i} V^{i}(x_{\lambda},t_{\lambda},s_{\lambda})$ by the positive real $\nu_{F_{t}}(x)$, and compute the limits. We find:
$$  \lim_{\lambda\to 0}\frac{\| \sum_{i=1}^{m} \alpha_{\lambda}^{i}\grad_{x} f_{i}(x_{\lambda},t_{\lambda})\| }{\nu_{F_{t_{\lambda}}}(x_{\lambda})} \ge 1 \ \mbox{ and } \ 
\lim_{\lambda\to 0}\frac{\| \sum_{i=1}^{m} \alpha_{\lambda}^{i} L s_{\lambda, i}^{L-1}\|}{\nu_{F_{t}}(x_{\lambda})} =0
$$
where the former follows by the definition of $\nu_{F_{t}}(x)$ as a minimum. 
The later limit justifies as follows: firstly, by 
the definition of $X$,  we have $\|s_{i}^{L-1}\| = \|f_{i}(x,t)\|^{\frac{L-1}{L}} \le  \|F(x,t)\|^{\theta +\e}$, where  $\frac{L-1}{L}= \theta + \e$ and therefore $\e >0$  by our choice of $L$. Next,  by applying the hypothesis \eqref{eq:cond1} we get : 
$$\|F(x,t)\|^{\theta +\e} \le c(x_{0}) \nu_{F_{t}}(x)  \|F(x,t)\|^{\e},$$ 
where $\|F(x,t)\|^{\e}$ converges to 0 when $x\to x_{0}$ and $t\to 0$.

This shows that the limit:
\begin{equation}\label{eq:limcotang}
   \lim_{\lambda\to 0}\frac{1}{\nu_{F_{t}}(x)} \sum_{i=1}^{m} \alpha_{\lambda}^{i} V^{i}(x_{\lambda},t_{\lambda},s_{\lambda})
\end{equation}
represents a nontrivial direction in $\bP(\bR^{n}\times \bR \times \bR^{m})^{*}$ with its last $m$ positions equal to $0$, and this holds for any path $\gamma(\lambda)$ as considered above.

Since the projective cotangent space $\bP T^{*}$ is the set of all directions of type \ref{eq:limcotang}, it follows that 
$T^{*}$ (which has dimension $m+1$) is contained in $N\times \bR^{*} \times \{0\}_{m}$, where $N\subset (\bR^{n})^{*}$ is some linear subspace. Therefore $T$ equals 
 $A\times \{0\}_{1} \times \bR^{m}$, for some linear subspace $A\subset \bR^{n}$, and since  $\dim T =n$, we get $\dim A = n-m$.
\end{proof}

We continue the proof of the theorem. Let us consider the projection map $p:(X,0)\longrightarrow(\bR \times \bR^{m},0)$, $(x,t,s)\mapsto(t,s)$. Since $(t, s)$ is a regular value of $p$, we have the following inclusion
\begin{equation}\label{eq:inclu14}
T=\lim_{k\rightarrow\infty}T_{(x,t,s)}\bigl(\pi^{-1}(t)\cap X\bigr)\supset \lim_{k\rightarrow\infty}T_{(x,t,s)}\bigl(p^{-1}(t,s)\cap X\bigr) =: T^{'}
\end{equation}
where  $\dim T' = \dim T_{(x,t,s)}\bigl(p^{-1}(t,s)\cap X\bigr)=n-m$,
and  $p^{-1}(t,s)\cap X=\widetilde F^{-1}\bigl(\widetilde F(x, t)\bigr)$.

  In order to complete the proof of the theorem, we need  to show the inclusion $T_{(x_{0},0,0)}W\subset T^{'}$.
  Both spaces, $T'$ and $T_{(x_{0},0,0)}W$, are included in $\bR^{n}\times \{0\}_{1}\times \{0\}_{m}$ by their definition, and they are also included in $T$, by \eqref{eq:inclu13} and \eqref{eq:inclu14}. Since we have the identification  $T= A \times \{0\}_{1} \times \bR^{m}$ by the above Lemma \ref{l:keylemma},  it follows that those two spaces must be included in the intersection, which is $A \times \{0\}_{1} \times \{0\}_{m}$.
 
 Now, since the equality of dimensions $\dim T' = \dim A = n-m$, this inclusion must yield an equality: $T'=A \times \{0\}_{1} \times \{0\}_{m}$. 
 This implies the inclusion $T_{(x_{0},0,0)}W \subset \bR^{n}\times \{0\}_{1}\times \{0\}_{m} \cap T$, thus we get 
 $T_{(x_{0},0,0)}W \subset T'$, which finishes the proof of our theorem.
 
\end{proof}

\subsection{Parusi\' nski type inequality in case of map germs}

We still consider analytic map germs  $F:(\bK^n\times\bK, 0)\to(\bK^m, 0)$, $n>m\ge 1$, regarded as a one-parameter deformations of $F_{0}:=F(x,0)$, and $F_{t}(x) = F(x,t)$. For simplicity, we still assume that $\bK = \bR$.
The following result extends \cite{JST} where one used a week Parusi\' nski type inequality \cite{Pa}.

\begin{theorem}\label{t:tame}   
Let $F_{t}(x) =F(x,t)$ be an analytic deformation of $F_{0}$ such that the map germ $F:(\bR^n\times\bR, 0)\to(\bR^m, 0)$ is $\partial$-Thom regular. If  $F$ satisfies the condition
\begin{equation}\label{eq:cond}
 \left\{  \begin{array}{ll}  \mbox{ For } \mbox{ any }  y\in F_{0}^{-1}(0)\cap \Sing F_{0} \m \{0\}   \mbox{ there is } \\
  \mbox{  a constant } c(y) >0   \mbox{ for which the following inequality holds: }\\
     \bigl\| \frac{\partial F}{\partial t}(x,t) \bigr\| \le c(y) \nu_{F_{t}}(x)    \  \mbox{ when } (x,t) \to (y,0), \ (x,t) \notin \Sing \widetilde F
     \end{array} \right.
\end{equation}
 then the associated map germ $\widetilde F$ is $\partial$-Thom regular.
 \end{theorem}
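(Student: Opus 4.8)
The plan is to reduce Theorem \ref{t:tame} to Theorem \ref{t:deformation} by showing that the hypotheses \eqref{eq:cond} together with the $\partial$-Thom regularity of $F$ (as a map on $\bR^{n}\times\bR$) imply the \L ojasiewicz type inequality \eqref{eq:cond1} for $\widetilde F$. So the first step is to fix a point $y\in F_{0}^{-1}(0)\cap \Sing F_{0}\m\{0\}$ and a sequence $(x,t)\to(y,0)$ with $(x,t)\notin\Sing\widetilde F$, and to compare the quantity $\|F(x,t)\|^{\theta}$ with $\nu_{F_{t}}(x)$.

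The key analytic input should come from the $\partial$-Thom regularity of $F$. Since $F$ is $\partial$-Thom regular and $y\in V(F_{0})\cap\Sing F_{0}$, hence $(y,0)\in V(F)$ — note $F(y,0)=F_{0}(y)=0$ — we may apply the known \L ojasiewicz-with-parameters consequence of Thom regularity: along a curve approaching $(y,0)$ inside $F^{-1}(0)$'s complement there is a \L ojasiewicz inequality controlling $\|F(x,t)\|^{\theta_{0}}$ by the \emph{full} Rabier distance of $F$ at $(x,t)$, i.e. by $\min_{\|a\|=1}\|\sum a_{i}\grad_{(x,t)}f_{i}(x,t)\|$, where now the gradient is taken in all $n+1$ variables. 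Concretely I would invoke (or reprove, via the curve selection lemma applied to the relative polar-type set, exactly as Hamm--L\^e and Massey do) that $\partial$-Thom regularity of $F$ yields, for $(x,t)\to(y,0)$ off $\Sing\widetilde F$, an inequality $\|F(x,t)\|^{\theta_{0}}\le C\,\bigl(\nu_{F_{t}}(x)+\|\tfrac{\partial F}{\partial t}(x,t)\|\bigr)$ for some $0<\theta_{0}<1$ and $C>0$: the right-hand side is (up to a constant) the Rabier distance of $F$ in all variables, since the $(n+1)$-st component of $\grad_{(x,t)}f_{i}$ is exactly $\partial f_{i}/\partial t$. This is the point where I expect the real work to lie, and it is essentially the content of Massey's argument in \cite{Ma} transplanted to the total space $\bR^{n}\times\bR$.

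Granting that, the second step is a purely algebraic manipulation: by hypothesis \eqref{eq:cond} the term $\|\tfrac{\partial F}{\partial t}(x,t)\|$ is itself bounded by $c(y)\,\nu_{F_{t}}(x)$ as $(x,t)\to(y,0)$ off $\Sing\widetilde F$, so the mixed bound collapses to $\|F(x,t)\|^{\theta_{0}}\le C(1+c(y))\,\nu_{F_{t}}(x)$ near $(y,0)$. Taking $c(x_{0}):=C(1+c(x_{0}))$ and $\theta:=\theta_{0}$ (or any exponent strictly between $\theta_{0}$ and $1$, absorbing a bounded factor $\|F(x,t)\|^{\theta_{0}-\theta}\to 0$), this is precisely condition \eqref{eq:cond1}. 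The exponent $\theta$ may be chosen uniformly: one can shrink the neighbourhood and use that on a compact punctured neighbourhood of the sphere the relevant \L ojasiewicz exponents are bounded away from $1$, or simply note that \eqref{eq:cond1} allows the constant to depend on $x_{0}$ and only requires a single $\theta$, which one obtains by the standard curve-selection/compactness argument on $F^{-1}(0)\cap\Sing F_{0}\cap S_{\e}$.

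Finally I would apply Theorem \ref{t:deformation} verbatim: its hypothesis \eqref{eq:cond1} now holds, so $\widetilde F$ is $\partial$-Thom regular, which is the assertion of Theorem \ref{t:tame}. A remark worth inserting is that the $\partial$-Thom regularity of $F$ cannot be dropped and is genuinely used only through the mixed \L ojasiewicz inequality of the first step; the hypothesis \eqref{eq:cond} alone (a Parusi\'nski-type bound on the $t$-derivative) then upgrades that mixed inequality to the clean one needed for the deformation. The main obstacle, to reiterate, is establishing the mixed \L ojasiewicz inequality from Thom regularity in the total space with the correct identification of the Rabier distance of $F$ with $\nu_{F_{t}}(x)$ plus $\|\partial F/\partial t\|$; everything after that is bookkeeping.
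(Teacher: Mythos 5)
Your reduction to Theorem \ref{t:deformation} cannot work, and the obstruction sits exactly at the step you yourself flag as the main one. You claim that the $\partial$-Thom regularity of $F$ yields a mixed \L ojasiewicz inequality $\|F(x,t)\|^{\theta_0}\le C\bigl(\nu_{F_{t}}(x)+\|\tfrac{\partial F}{\partial t}(x,t)\|\bigr)$, i.e.\ a strong \L ojasiewicz inequality for the map $F$ with respect to its full Rabier distance. But the known implication (Hamm--L\^e for functions, Massey \cite{Ma} for maps) goes in the \emph{opposite} direction: the inequality implies Thom regularity. For functions the inequality comes for free from analyticity (the classical \L ojasiewicz gradient inequality), which is why the reduction does work for $m=1$ --- that is precisely Remark \ref{r:AG}; for map germs with $m\ge 2$ the strong \L ojasiewicz inequality is an additional, genuinely restrictive hypothesis which follows neither from analyticity nor from Thom regularity, and Massey treats it as such. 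So there is no result to invoke, and the claimed implication is false in general.

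Concretely, Remark \ref{ex:4} of the paper is a counterexample to your strategy: for $F(x,y,z,t)=(x,(y(x^2+y^2+z^2))^3+x^2\,y(x^2+y^2+z^2)+tx^{k})$ with $k\ge 5$, the hypotheses of Theorem \ref{t:tame} are satisfied (condition \eqref{eq:cond} is checked there and the theorem is applied to this very $F$), yet condition \eqref{eq:cond1} fails: along the paths $(s,0,z_{0},0)$ one has $\ord_{s}\nu_{F_{t}}=2$ while $\ord_{s}\|F\|=1$, so $\|F\|^{\theta}\le c\,\nu_{F_{t}}$ is impossible for any $\theta<1$; the same computation kills your mixed inequality as well, since the full Rabier distance of $F$ along these paths also has order $2$ in $s$. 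Hence Theorem \ref{t:tame} covers situations strictly outside the reach of Theorem \ref{t:deformation}, and no reduction between them exists in the map setting. The paper's actual proof makes no use of any \L ojasiewicz-type inequality: it slices the Thom stratification of $F$ by $\{t=0\}$, uses hypothesis \eqref{eq:cond} only to prove that the limit $T$ of tangent spaces to fibres of $F$ is transverse to $\bR^{n}\times\{0\}$ (Lemma \ref{l:A}), and then concludes by the dimension count giving $T'=T\cap(\bR^{n}\times\{0\})$, together with the identification of the singular sets in Lemma \ref{eq:coherent}. If you want to salvage your write-up, you must replace the first step by an argument of this stratification-theoretic kind rather than an inequality transfer.
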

\begin{proof}
According to Definition \ref{def:stratified thom}, what we have to show amounts to proving the existence of a Whitney (a)-regular stratification of $\widetilde F^{-1}(0,0) \cap \Sing \widetilde F \m \{(0,0)\}$ in some ball $B'$ such that it is Thom (a$_{\widetilde F}$)-regular with respect to the stratum $B'\m \widetilde F^{-1}\bigl(\widetilde F(B'\cap \Sing \widetilde F)\bigr)$.  Let us first observe the equalities of set germs at the origin: 

\begin{lemma}\label{eq:coherent}
\[
\widetilde F^{-1}(0,0) \cap \Sing \widetilde F \m \{0\} = F_{0}^{-1}(0)\cap \Sing F_{0} \m \{0\} = F^{-1}(0)\cap \Sing F \cap \{t=0\} \m \{0\}.
\]
\end{lemma}
\begin{proof}
The first equality, as well as the inclusion ``$\supset$'' in place of the second equality are direct consequences of the respective definitions of the singular sets. 

To show the reciprocal inclusion  ``$\subset$'' in place of the second equality, we will use the condition \eqref{eq:cond} at the point $(y,0)\in F_{0}^{-1}(0)\cap \Sing F_{0}$, where $y\not= 0$.  

We still use the notation $F(x,t)=(f_{1}(x,t),\dots,f_{m}(x,t))$.
Firstly, we observe that there is some linear combination $\sum_{i=1}^{m} a_i \gradx f_i (y,0)$ equal to 0 since the gradients  are linearly dependent  at the singular point $(y,0) \in F_{0}^{-1}(0)\cap \Sing F_{0} \m \{0\}$, and thus Definition \eqref{eq:niu} tells that $\nu_{F_{0}}(y) =0$.

Secondly, using the same fixed coefficients $a_{i}$,  we have 
$$\lim_{t\to 0, x\to y} \sum_{i=1}^{m} a_i \gradx f_i (x,t) = \sum_{i=1}^{m} a_i \gradx f_i (y,0),$$
 by the continuity of the gradient functions. 
 By the Definition \eqref{eq:niu} as a ``minimum'' we get $\nu_{F_{t}}(x) \le \| \sum_{i=1}^{m} a_i \gradx f_i (x,t) \|$, and since the right hand side is equal to 0, it follows that $\lim_{t\to 0, x\to y} \nu_{F_{t}}(x) =0$.
 It is at this moment that we apply the condition \eqref{eq:cond} to deduce, by continuity,  that $\frac{\partial F}{\partial t}(y,0) =0$. This shows that the last column in the Jacobian matrix of $F$ at $(y,0)$ is zero, thus we get  $(y,0)\in F^{-1}(0)\cap \Sing F \cap \{t=0\} \m \{0\}$, which ends the proof of our lemma. 
\end{proof}

By our hypothesis, there is a semi-analytic Whitney (a)-regular stratification $\cS$ of  a ball $B\subset \bR^{n}\times \bR$ which is Thom (a$_{F}$)-regular, and such that $F^{-1}(0)\cap \Sing F$ is a union of strata. Let  $(y,0)\in B$,  $y\not= 0$,  be a point on some stratum $V\in \cS$, where $V\subset F^{-1}(0)\cap \Sing F$.

Let $(x_{t},t)\to (y,0)$ be a continuous path  such that $(x_{t},t)\not\in \Sing F$. Let  $T_{(x_{t},t)}F^{-1}(s_{t})$ denote the tangent space at some smooth point $(x_{t},t)$ of the fibre over  $s_{t}:=F(x_{t},t)$. 
 The assumed $\partial$-Thom (a$_{F}$)-regularity condition at $(y,0)$ amounts to the following property: 
 for any choice of the path $(x_{t},t)\to (y,0)$ as above, we have the inclusion:
 
\begin{equation}\label{eq:incl1}
  T := \lim_{(x_{t},t)\to (y,0)}T_{(x_{t},t)}F^{-1}(s_{t}) \supset T_{(y,0)}V,
\end{equation}
 whenever the limit exists in the appropriate Grassmannian, in which case we have $\dim T = n-m+1$.
  
We  next consider the slice of the stratification $\cS$ by $\{t=0\}$, consisting of the sets $V':=V\cap \{t=0\}$ for all $V\in \cS$. There exists the roughest semi-analytic Whitney (a)-regular stratification $\cS'$ of the central fibre $\widetilde F^{-1}(0,0) = F_{0}^{-1}(0)$ which refines this slice stratification, in particular the sets $V'$ are unions of strata of $\cS'$.

\begin{lemma}\label{l:A}
Under the hypotheses of Theorem \ref{t:tame}, one has $T \not\subset \bR^{n} \times \{0\}$, equivalently: $T$ is transversal to $\bR^{n} \times \{0\}$ at $(y,0)$
\end{lemma}
\begin{proof}
 The normal space to the tangent space $T_{(x,t)}F^{-1}(s_{t})$ is spanned by
the gradient vectors $\grad f_{i}(x,t)$, for $i=\overline{1, m}$.   We consider paths $\gamma(t) :=(x_{t},t)\in  B\subset \bR^{n}\times \bR$
depending on the parameter $t$ in some small neighbourhood of the origin $0\in \bR$. 
As shown in the proof of Lemma \ref{l:keylemma}, the limit cotangent space $T^{*}$  is the set of the scalar multiples of all the limits when $t\to 0$ of the linear combinations
$\sum_{i=1}^{m} \alpha_{\lambda}^{i} \grad f_{i}(x_{t},t)$ with $\| \alpha_{\lambda}\| =1$.

By absurd, let us suppose that $T\subset \bR^{n} \times \{0\}$. This is equivalent to $T^{*}\ni (0, \ldots, 0,1)$. 
Using the inequality \eqref{eq:cond},  we find that, for any path $(x_{t},t)$, the $(n+1)$-dimensional vector:
$$\lim_{t\to 0} \frac{1}{\nu_{F_{t}}(x_{t})} \sum_{i=1}^{m} \alpha_{\lambda}^{i} \grad f_{i}(x_{t},t)$$
has on the first $n$ entries the coordinates a vector of modulus bounded from below by 1, whereas the last entry is bounded from above by a positive constant. This tells that no such vector can be of the form $(0, \ldots , 0, \lambda)$, with $\lambda \not= 0$, thus this cannot be a vector in $T^{*}$, hence we get a contradiction.
\end{proof}
In order to show that  $\cS'$ is a $\partial$-Thom stratification for the map $\widetilde F$  (cf Definition \ref{d:partialThom}) we need to prove the $\partial$-Thom (a$_{\widetilde F}$)-regularity condition at some point $(y,0)$. This amounts to showing that
for any choice of a sequence $(x_{t},t)\to (y,0)$ such that $(x_{t},t)\not\in \Sing F$ and $s_{t}:= F_{t}(x_{t})$, we must have the inclusion (where again we assume without loss of generality that the limit exists in the appropriate Grassmannian):
\begin{equation}\label{eq:incl2}
 T' :=  \lim_{(x_{t},t)\to (y,0)}T_{(x_{t},t)}(F_{t}^{-1}(s_{t})\times\{t\}) \supset T_{(y,0)}V',
\end{equation}
where $V'$ is the stratum of $\cS'$ which contains $(y,0)$, in particular one has $T_{(y,0)}V\supset  T_{(y,0)}V'$.

Both sides of \eqref{eq:incl2} are included in $\bR^{n}\times \{0\}$ by construction, and also in $T$, thus they are included in 
the intersection $T \cap \bR^{n}\times \{0\}$, which by Lemma \ref{l:A} is a transversal intersection, and hence of dimension $n-m$.
But since $T' \subset  T   \cap  \bR^{n}\times \{0\}$ and $\dim T'=n-m$, it follows that we have equality: $T' =  T \cap \bR^{n}\times \{0\}$. Consequently $T_{(y,0)}V' \subset T'$, and this ends our proof.
\end{proof}

\begin{remark}\label{ex:4}
One may be tempted to say that Theorem \ref{t:tame} implies Theorem  \ref{t:deformation}.
This turns out to be true in case of function germs, see Remark \ref{r:AG} below,  but it is not true in the setting of map germs, as we will show in the following.

Let us consider the map germ $F(x,y,z,t)= (x,g(x,y,z,t))=(x, (y(x^2 + y^2 + z^2))^3 + x^2(y(x^2 + y^2 + z^2))+tx^{k})$ with $k\geq 5$, as deformation of $F_{0} :=  (x,g(x,y,z,0))$. Then $\Sing F=V(F)=\{x=y=0\}$. By considering paths $\phi(s)=(s,0,z_{0},0)$ such that $\lim_{s\to 0}\phi(s)=(0,0,z_{0},0)\in\Sing F_{0}\m\{0\}$, we have $\ord_{s}\nu_{F_{t}}(\phi(s))=2$ and $\ord_{s}x(s)=1$, and therefore $F$ does not satisfy the  inequality \eqref{eq:cond1}. 
	
	On the other hand, for any analytic path $\phi(s)=(x(s),y(s),z(s),t(s))$ such that  $\underset{s\rightarrow0}{\lim}\phi(s)=(0,0,z_{0},0)\in\Sing F_{0}\m\{0\}$, we have $\ord_{s}\nu_{F_{t}(\phi(s))}\leq 2\cdot \ord_{s}x(s)$. Since $k\geq 5$, we get:
	$\ord_{s}\bigl\|\frac{\partial F}{\partial t}\bigr\|=k \cdot \ord_{s}x(s)>\ord_{s}\nu_{F_{t}}(\phi(s)),$
	which implies that condition \eqref{eq:cond} holds, and therefore, by Theorem \ref{t:tame}, $\widetilde F=(F, t)$ is $\partial$-Thom regular. 
\end{remark}

In case of  deformations of function germs, the above theorem extends the setting of function germ deformations:
\begin{corollary}\cite{JST} \label{c:tame}
Let $F_{t}(x) = F(x,t)$ be a C$^{1}$-family of analytic function germs  $F_{t}: (\bR^{n}, 0) \to (\bR, 0)$
 which satisfies the condition:

 \begin{equation}\label{eq:cond2}
 \left\{  \begin{array}{ll}  \mbox{ For } \mbox{ any }  y\in \Sing F_{0} \m \{0\}, \mbox{ there is } c(y) >0 \mbox{ such that } \mbox{ for } (x,t)\not\in \Sing \widetilde F: \\
    \bigl| \frac{\partial F}{\partial t}(x,t) \bigr| \le  c(y) \biggl\| \frac{\partial F}{\partial x_{1}}(x,t) , \ldots , \frac{\partial F}{\partial x_{n}}(x,t)   \biggr\|   \  \mbox{ when } (x,t) \to (y,0).
     \end{array} \right.
\end{equation} 
  
 Then the deformation $\widetilde F$ is $\partial$-Thom regular.
 \fin
 \end{corollary}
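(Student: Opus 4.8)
The plan is to derive Corollary \ref{c:tame} as the special case $m=1$ of Theorem \ref{t:tame}, after reconciling two apparent gaps between the two statements: the hypothesis on $F$ itself ($\partial$-Thom regularity in the theorem versus nothing in the corollary), and the index set of the singular locus ($F^{-1}(0)\cap\Sing F_{0}$ in the theorem versus $\Sing F_{0}$ in the corollary). First I would observe that for $m=1$ the Rabier distance function $\nu_{F_{t}}(x)$ is simply $\bigl\|\bigl(\tfrac{\partial F}{\partial x_{1}}(x,t),\ldots,\tfrac{\partial F}{\partial x_{n}}(x,t)\bigr)\bigr\|$, since the minimum over $\|a\|=1$ with $a$ a scalar of modulus $1$ changes nothing; hence the inequality \eqref{eq:cond2} is literally the inequality \eqref{eq:cond} in this case. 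So the content of the corollary is that for function germs one may drop the standing assumption that $F$ be $\partial$-Thom regular, and one may weaken the reference point set.

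Next I would explain why $\partial$-Thom regularity of $F$ is automatic here. A $\bK$-analytic \emph{function} germ $F:(\bR^{n}\times\bR,0)\to(\bR,0)$ always admits a Thom $(a_{F})$-regular stratification of a neighbourhood of the origin adapted to $V(F)$: this is the classical Hironaka--Hamm--L\^e statement, which underlies all of Section \ref{ss:classicandnew}. Restricting such a stratification and intersecting the relevant strata with $\Sing F$ gives precisely a stratification witnessing $\partial$-Thom regularity of $F$ in the sense of Definition \ref{d:partialThom}. Thus the hypothesis ``$F$ is $\partial$-Thom regular'' in Theorem \ref{t:tame} is satisfied for free when $m=1$, and in particular the intermediate conclusions used in its proof — the inclusion \eqref{eq:incl1} and Lemma \ref{eq:coherent} — are available. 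Regarding the index set, by Lemma \ref{eq:coherent} we have $\widetilde F^{-1}(0)\cap\Sing\widetilde F\m\{0\}=F^{-1}(0)\cap\Sing F\cap\{t=0\}\m\{0\}$, and since $\widetilde F^{-1}(0)=F_{0}^{-1}(0)$ while $\Sing\widetilde F$ forces $t=0$, the set of points actually controlled in the proof of Theorem \ref{t:tame} is $F_{0}^{-1}(0)\cap\Sing F_{0}\m\{0\}$, which is contained in $\Sing F_{0}\m\{0\}$. Hence requiring \eqref{eq:cond2} to hold at \emph{all} $y\in\Sing F_{0}\m\{0\}$ is more than enough; one only ever uses it at the subset where in addition $F_{0}(y)=0$.

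With these two reductions in place, the proof is just: apply Theorem \ref{t:tame} with $m=1$; its hypotheses hold because $F$ is automatically $\partial$-Thom regular (previous paragraph) and because \eqref{eq:cond2} restricted to $F_{0}^{-1}(0)\cap\Sing F_{0}\m\{0\}$ is exactly \eqref{eq:cond}; the conclusion is that $\widetilde F=(F,t)$ is $\partial$-Thom regular, which is the assertion. The one point that needs a sentence of care, and which I expect to be the only real subtlety, is the automatic $\partial$-Thom regularity of a function germ $F$ on the ambient $\bR^{n+1}$: one should cite the existence of Thom stratifications for analytic functions (as invoked via \cite{HL} in Subsection \ref{ss:classicandnew}, or via \cite{Ma}), and check that the stratification it provides can be arranged so that $\{0\}$, $V(F)$ and $V(F)\cap\Sing F$ are unions of strata and the big open stratum is $B\m F^{-1}(F(B\cap\Sing F))$, so that it literally meets the bookkeeping of Definition \ref{d:partialThom}. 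Everything else is a direct invocation of Theorem \ref{t:tame}.
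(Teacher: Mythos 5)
Your reduction is exactly how the paper treats this statement: Corollary \ref{c:tame} is presented as the $m=1$ specialization of Theorem \ref{t:tame} with no separate argument, and the three reconciliations you spell out --- that for $m=1$ the Rabier function $\nu_{F_t}$ is just the norm of the $x$-gradient, that $\partial$-Thom regularity of an analytic function germ $F$ on $\bR^{n}\times\bR$ is automatic via the \L ojasiewicz inequality and the Hamm--L\^e/Massey results already invoked in \S\ref{ss:classicandnew} (note that the \L ojasiewicz inequality also forces $\Sing F\subset V(F)$ near $0$, so the bookkeeping of Definition \ref{d:partialThom} reduces to the classical Thom condition along $V(F)$), and that \eqref{eq:cond2} on $\Sing F_0\m\{0\}$ implies \eqref{eq:cond} on the smaller set $F_0^{-1}(0)\cap\Sing F_0\m\{0\}$ --- are precisely the details the paper leaves implicit. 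The only caveat is that the specialization needs $F(x,t)$ to be analytic in $(x,t)$ jointly, as Theorem \ref{t:tame} assumes, which is evidently how the ``C$^1$-family'' phrasing of the corollary is meant to be read.
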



\begin{remark}\label{r:AG}
 By applying the classical \L ojasiewicz inequality to the function germ $F(x,t)$ over some neighbourhood $U$ of $0\in \bR^{n}\times \bR$, we get the existence of $0<\theta<1$ and $C>0$ such that the inequality:
\begin{equation}\label{eq:condfunction}
 \|F(x,t)\|^{\theta}\leq C \biggl\|\frac{\partial F}{\partial t}(x,t) , \frac{\partial F}{\partial x_{1}}(x,t) , \ldots , \frac{\partial F}{\partial x_{n}} (x,t)  \biggr\|
\end{equation}
holds over $U$.  From \eqref{r:AG} and \eqref{eq:cond2} we then deduce:
\begin{equation}\label{eq:condfunct3}
 \left\{  \begin{array}{ll}  \mbox{ For } \mbox{ any }  x_{0}\in \Sing F_{0} \m \{0\}, \mbox{ there is } k(x_{0}) >0 \mbox{ such that }:  \\
  \|F(x,t)\|^{\theta}\leq k(x_{0}) \biggl\|  \frac{\partial F}{\partial x_{1}}(x,t), \ldots , \frac{\partial F}{\partial x_{n}}(x,t) \biggr\|
  \  \mbox{ when } (x,t) \to (x_{0},0).
     \end{array} \right.
\end{equation} 
  While this shows the implication \eqref{eq:cond2} $\Rightarrow$ \eqref{eq:condfunct3}, let us observe that the stronger condition \eqref{eq:cond2} has the advantage  upon  \eqref{eq:condfunct3} that it is easier to test.
Remark that the converse implication is not true, see Example \ref{ex:3}.  For map germs, even the direct implication is not true, as seen in Remark \ref{ex:4}.

In \cite{AG} one considers the condition \eqref{eq:condfunct3} for deformations of function germs $F$ under the restriction that the associated map $\widetilde F$  has an isolated critical value at $0$.  Let us therefore point out that our Theorem \ref{t:deformation} represents a far-reaching extension of the main result \cite[Theorem 3.5]{AG} since it works for deformations of map germs $F$, and without any special assumption on the singular locus of $\widetilde F$. 
\end{remark}

\begin{example}\label{ex:3}
Let $F: (\bR^{3}\times \bR,0) \to (\bR, 0)$ be the deformation $F(x,y,z,t) :=(x^{4}+y^{2}z^{2})(x-t)+t^{2}x$, of parameter $t\in \bR$.
When $t=0$, we have $F_{0}(x,y,z)=x(x^{4}+y^{2}z^{2})$ and $\Sing F_{0}(x,y,z)=\{x=y=0\}\cup\{x=z=0\}$. For the paths $\phi(s)=(x(s),y(s),z(s),t(s))=\bigl(s,s^{4},z_{0},ks^{3}\bigr)$ with $z_{0}\neq0$ and $k \gg 1$, making $s\to 0$ we see that there is  no $c(z_{0})>0$ such that condition \eqref{eq:cond2} holds.  
Nevertheless, it appears that the condition \eqref{eq:condfunct3} holds for some $\frac{4}{5}<\theta<1$. Thus  \eqref{eq:condfunct3} does not imply \eqref{eq:cond2}, however  $\widetilde F$ is $\partial$-Thom by Corollary \ref{c:tame}.
\end{example}

 \section{Conservation of the $\partial$-Thom regularity in compositions of deformations} \label{s:ag}

The existence of the Milnor-Hamm tube fibration for the composition of map germs is treated in the recent paper \cite{CJT} with criteria involving the $\rho$-regularity condition.  In case of the composition of deformations of function germs, in the aim of insuring the existence of the Milnor-Hamm tube fibration, the authors of  \cite{AG} consider the property \eqref{eq:condfunct3}  of Remark \ref {r:AG}
within the class of deformations of function germs $Q$ such that $\widetilde Q$ has isolated singular value.
They leave open the question whether or not this condition is preserved by the composition of such deformation maps. 

  As  the $\partial$-Thom regularity is a sufficient condition which insures the existence of the Milnor-Hamm tube fibration for the composition of deformations of function germs, our answer to the dilemma is as follows: instead of the property \eqref{eq:condfunct3},  which might not be preserved by compositions, one should consider the property ``\emph{$\widetilde F$ is $\partial$-Thom regular}''.

 Our next result shows that the property ``\emph{$\partial$-Thom regularity}''  of $\widetilde G$ and $\widetilde F$ is indeed preserved by the composition $\widetilde H = \widetilde G\circ \widetilde F$. Moreover, unlike \cite{AG}, we do not need to assume 
 that $\widetilde G$ has an isolated critical value.  Such a result turns out to hold under more general conditions for compositions of map germs. 
 
\begin{theorem}\label{t:Thomcompo}
   If  $\widetilde F$ and $\widetilde G$  are $\partial$-Thom regular, and if $\widetilde F$ has an isolated singular value, then the composed map $\widetilde H = \widetilde G\circ \widetilde F$ is  $\partial$-Thom regular.
\end{theorem}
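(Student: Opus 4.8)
The plan is to build a $\partial$-Thom stratification for $\widetilde H = \widetilde G \circ \widetilde F$ by pulling back a $\partial$-Thom stratification of $\widetilde G$ through $\widetilde F$ and refining it with the one for $\widetilde F$. First I would fix a $\partial$-Thom stratification $\cW_{G}$ of a small ball $B_{G}$ around the origin in the target of $\widetilde F$ (which is the source of $\widetilde G$), so that the pair $\bigl(B_{G}\m \widetilde G^{-1}(\widetilde G(B_{G}\cap \Sing \widetilde G)),\, W\bigr)$ is $(a_{\widetilde G})$-regular for each stratum $W\subset V(\widetilde G)\m\{0\}$; similarly fix a $\partial$-Thom stratification $\cW_{F}$ of a small ball $B_{F}$ in the source. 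Since $\widetilde F$ has an isolated singular value, the image $\widetilde F(\Sing\widetilde F)$ is (a well-defined germ equal to) $\{0\}$, so $\Sing\widetilde F\subset \widetilde F^{-1}(0) = \widetilde F^{-1}(V(\widetilde G))$; this is what makes the pullback work cleanly. I would then take the coarsest common refinement of $\widetilde F^{-1}(\cW_{G})$ and $\cW_{F}$ inside a small enough ball $B$, after shrinking so that $\widetilde F(B)\subset B_{G}$, and check it is a Whitney $(a)$-stratification (using that $\widetilde F$ restricted to the open dense stratum $B\m \widetilde F^{-1}(\widetilde F(B\cap\Sing\widetilde F))$ is a submersion onto its image, so preimages of Whitney strata are Whitney, plus a standard refinement argument to control the singular locus of the restriction).

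Next I would identify the central fibre and singular locus of $\widetilde H$. One has $V(\widetilde H) = \widetilde F^{-1}(V(\widetilde G))$, and by the chain rule $\Sing\widetilde H = \Sing\widetilde F \,\cup\, \widetilde F^{-1}(\Sing\widetilde G)$ (the union being understood on the open stratum where $\widetilde F$ is a submersion, the other part being contained in $\Sing\widetilde F$). Because $\Sing\widetilde F\subset \widetilde F^{-1}(0)\subset V(\widetilde H)$, the set $V(\widetilde H)\cap\Sing\widetilde H$ is a union of strata of the refined stratification, and $B\m\widetilde H^{-1}\bigl(\widetilde H(B\cap\Sing\widetilde H)\bigr)$ is an open stratum (after one more shrinking, using that $\widetilde G$ is nice so $\widetilde H$ is nice — here one invokes Proposition \ref{t:images} and the fact that the composition of nice maps with $\widetilde F$ having isolated singular value is nice, as in \cite{CJT}).

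The core step is the $(a_{\widetilde H})$-regularity. Let $W$ be a stratum in $V(\widetilde H)\m\{0\}$ and take a sequence $q_{k}\to q_{0}\in W$ with $q_{k}$ in the open stratum $\Omega := B\m\widetilde H^{-1}(\widetilde H(B\cap\Sing\widetilde H))$, so $\widetilde F(q_{k})$ lies in the open stratum $\Omega_{G}$ of $\cW_{G}$ and $\widetilde F$ is a submersion near $q_{k}$. Then $T_{q_{k}}\widetilde H^{-1}(\widetilde H(q_{k})) = (d_{q_{k}}\widetilde F)^{-1}\bigl(T_{\widetilde F(q_{k})}\widetilde G^{-1}(\widetilde G\widetilde F(q_{k}))\bigr)$. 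I would split into two cases according to whether $q_{0}$ lies in $\Sing\widetilde F$ or not. If $q_{0}\notin\Sing\widetilde F$, then $\widetilde F$ is a submersion at $q_{0}$, $\widetilde F$ maps $W$ into a stratum $W_{G}\subset V(\widetilde G)\m\{0\}$ of $\cW_{G}$, and $(a_{\widetilde G})$-regularity of $\bigl(\Omega_{G},W_{G}\bigr)$ passes through the submersion $\widetilde F$ to give $(a_{\widetilde H})$-regularity of $(\Omega,W)$ at $q_{0}$: concretely, any limit of tangent spaces $T_{q_{k}}\widetilde H^{-1}(\cdots)$ is the $d\widetilde F$-preimage of a limit of $T_{\widetilde F(q_{k})}\widetilde G^{-1}(\cdots)$, which contains $T_{\widetilde F(q_{0})}W_{G}\supset d_{q_{0}}\widetilde F(T_{q_{0}}W)$, so the limit contains $T_{q_{0}}W$ by taking $d\widetilde F$-preimages. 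If $q_{0}\in\Sing\widetilde F$, then $W$ is a stratum of $\cW_{F}$ contained in $V(\widetilde F)$ and the $(a_{\widetilde F})$-regularity of $\bigl(\Omega_{F},W\bigr)$ handles it, because limits of $T_{q_{k}}\widetilde H^{-1}(\widetilde H(q_{k}))$ contain limits of the smaller spaces $T_{q_{k}}\widetilde F^{-1}(\widetilde F(q_{k}))$, which in turn contain $T_{q_{0}}W$ — here one needs a dimension/continuity bookkeeping argument, plus a possible further Whitney-refinement so that the pullback stratification is genuinely finer than $\cW_{F}$ near $\Sing\widetilde F$.

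The main obstacle I anticipate is exactly this last point: making sure the refined stratification is simultaneously $(a_{\widetilde H})$-regular at points of $\Sing\widetilde F$ and at points of $\widetilde F^{-1}(\Sing\widetilde G)\m\Sing\widetilde F$, and that these two regimes are compatible along their common boundary. The isolated-singular-value hypothesis on $\widetilde F$ is what glues the two: it forces $\Sing\widetilde F$ into $V(\widetilde H)$, so no new "bad" strata appear outside the central fibre, and the two stratifications $\widetilde F^{-1}(\cW_{G})$ and $\cW_{F}$ only need to be merged inside $\widetilde H^{-1}(0)$. I would also need to be slightly careful that the limit tangent spaces in the $q_{0}\in\Sing\widetilde F$ case really do receive the contribution from $\cW_{F}$ and not merely from the (possibly larger) fibres of $\widetilde H$; a clean way is to observe $\widetilde H^{-1}(\widetilde H(q_{k}))\supset \widetilde F^{-1}(\widetilde F(q_{k}))$ as germs near $q_{k}$ when $\widetilde G\widetilde F(q_{k})$ is a regular value "in the $\widetilde F$-direction", which holds on $\Omega$ by construction.
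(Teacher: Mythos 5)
Your proposal follows essentially the same route as the paper's proof: pull back the $\partial$-Thom stratification of $\widetilde G$ through $\widetilde F$ where $\widetilde F$ is a submersion, keep the $\partial$-Thom stratification of $\widetilde F$ along $V(\widetilde F)$, use the fibre inclusion $\widetilde F^{-1}(\widetilde F(q_{k}))\subset \widetilde H^{-1}(\widetilde H(q_{k}))$ at the approaching points $q_{k}$, which are regular points of $\widetilde F$ thanks to the isolated singular value hypothesis, and finish with a Whitney (a)-refinement to make the two families of strata compatible.

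The one place that needs repair is your dichotomy. You split according to $q_{0}\in\Sing\widetilde F$ versus $q_{0}\notin\Sing\widetilde F$, whereas the paper splits according to $y\in V(\widetilde F)$ versus $y\in V(\widetilde H)\m V(\widetilde F)$, and the latter is the split that works: at a point $q_{0}\in V(\widetilde F)\m\Sing\widetilde F$ one has $\widetilde F(q_{0})=0$, which lies in the zero-dimensional stratum $\{0\}$ of the stratification of $V(\widetilde G)$, so there is no stratum $W_{G}\subset V(\widetilde G)\m\{0\}$ to which your first-case argument (push $W$ forward, invoke $(a_{\widetilde G})$, pull back by $d\widetilde F$) applies, and Definition \ref{d:partialThom} imposes no Thom condition at the point stratum $\{0\}$. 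Such points must instead be handled by your second mechanism, namely the $(a_{\widetilde F})$-condition for the stratum of the $\widetilde F$-stratification through $q_{0}$ combined with the fibre inclusion; that argument applies verbatim at every point of $V(\widetilde F)\m\{0\}$, singular or not, and this is exactly how the paper argues. With the cases re-drawn along $V(\widetilde F)$ your proof is complete; the remaining slips (the equality $\widetilde F^{-1}(0)=\widetilde F^{-1}(V(\widetilde G))$ should be an inclusion, and the description of $\Sing\widetilde H$ is only needed in the weaker form $\Sing\widetilde F\subset\widetilde H^{-1}(0)$) are harmless.
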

\begin{proof}
 The proof reduces to the case $\Sing \widetilde H \cap \widetilde H^{-1}(0)$ has positive dimension as a set germ at $0\in \bR^{n}\times \bR$, since otherwise $\widetilde H$ is $\partial$-Thom regular by definition.
 
Since $\widetilde F$ is $\partial$-Thom regular, there exists a  stratification $\cW$ of the open ball $B_{\e}\subset \bR^{n}\times \bR$, for some  $\epsilon>0$ small enough, which verifies the conditions of  Definition \ref{def:stratified thom}.

Consider a sequence $\{x_{i}\}_{i\in \bN}\subset B_{\varepsilon}\setminus \widetilde H^{-1}(\Disc(\widetilde H))$ such that $x_{i}\rightarrow y\in V(\widetilde H)$.

Let first $y \in V(\widetilde F)$. We denote by 
$W\in\cW$ the stratum such that $y\in W$. Remark that $x_{i}$ are regular points of $\widetilde F$
by the assumption that $\widetilde F$ has an isolated singular value. 

Assuming (without loss of generality) that the limits exist in the appropriate Grassmannians, and using the Thom (a$_{\widetilde F}$)-regularity, we get the first inclusion in:
$$T_{y} W \subset\lim_{i\rightarrow\infty}T_{x_{i}}\widetilde F^{-1}(\widetilde F(x_{i})) \subset  \lim_{i\rightarrow\infty}T_{x_{i}}\widetilde H^{-1}(\widetilde H(x_{i})) ,$$
whereas the second inclusion is due to the inclusion of nonsingular fibres 
 $\widetilde F^{-1}(\widetilde F(x_{i}))\subset \widetilde H^{-1}(\widetilde H(x_{i}))$, for any $i$.
 This shows that the pair $\bigl(B^{n}_{\varepsilon}\setminus \widetilde H^{-1}(\Disc(\widetilde H)\bigr), W)$ satisfies Thom (a$_{\widetilde H}$)-condition.
 
 If now $y\in V(\widetilde H)\setminus V(\widetilde F)$, then $y$ is a regular point of $\widetilde F$. Since  $\widetilde H(x_{i})\notin \Disc(\widetilde H)$, we also have $\widetilde H(x_{i})\notin\Disc(\widetilde G)$. 
	
Since $\widetilde G$ is $\partial$-Thom regular by our hypothesis,  we may consider an open ball $B_{\delta}\subset \bR^{p}\times \bR$ together with 
a stratification $\cS$ which satisfy Definition  \ref{def:stratified thom} for $\widetilde G$ instead of $\widetilde F$. Therefore the point $\widetilde F(y)\subset V(\widetilde G)$ belongs to a positive dimensional stratum $S\subset \cS$. The assumed
(a$_{\widetilde G}$)-regularity of the pair $\bigl(B_{\delta}\setminus \widetilde G^{-1}(\Disc(\widetilde G)), S\bigr)$ yields:
\begin{equation}\label{eq:Thom}
		T_{\widetilde F(y)}S  \subset \lim_{i\rightarrow\infty} T_{\widetilde F(x_{i})} \widetilde G^{-1}(\widetilde H(x_{i})),
\end{equation}
where we may again tacitly assume that the limit exist in the appropriate Grassmannian.

The map $\widetilde F$  is a submersion in some neighbourhood of $y$. By applying to \eqref{eq:Thom} the inverse map $(T_{*}\widetilde F)^{-1}$, which commutes with the limit ``$\lim_{i\rightarrow\infty}$'',  we get the equalities
$T_{y}\widetilde F^{-1}(S) = (T_{y}\widetilde F)^{-1}(T_{\widetilde F(y)}S)$ and
 $$(T_{x_{i}}\widetilde F)^{-1}(\lim_{i\rightarrow\infty} T_{\widetilde F(x_{i})} \widetilde G^{-1}(\widetilde H(x_{i}))) =
		\lim_{i\rightarrow\infty} (T_{x_{i}}\widetilde F)^{-1}(\widetilde G^{-1}(\widetilde H(x_{i}))),$$
therefore we obtain the inclusion:
\begin{equation}\label{eq:Inclusion}
	T_{y}\widetilde F^{-1}(S) \subset 
		\lim_{i\rightarrow\infty} (T_{x_{i}}\widetilde F)^{-1}(\widetilde G^{-1}(\widetilde H(x_{i}))) = \lim_{i\rightarrow\infty} (T_{x_{i}}\widetilde H)^{-1}(\widetilde H(x_{i})).
\end{equation}
This  shows that the pair $\bigl(B_{\varepsilon}\setminus \widetilde H^{-1}(\Disc(\widetilde H)), \widetilde F^{-1}(S)\bigr)$ is Thom (a$_{\widetilde H}$)-regular, and this  holds for all  strata $S\in \cS$ which are inside $V(\widetilde G)\m \{0\}$.

We have now a stratification of $V(\widetilde H)$ which consists of the newly defined strata $\widetilde F^{-1}(S)$ for all $S\in \cS$, $S\subset V(\widetilde G)\m \{0\}$, and all the strata $W\in \cW'$, $W\subset V(\widetilde F)$. What we have proved up to now is that
the pair consisting of $B_{\varepsilon}\setminus \widetilde H^{-1}(\Disc(\widetilde H))$ and any of the above enumerated strata in $V(\widetilde H)$ satisfies the Thom (a$_{\widetilde H}$)-regularity condition.

There is still one more thing to check:  the Whitney (a)-regularity of the appropriate couples of strata.
Since $\widetilde F$ is a submersion on $V(\widetilde H)\m V(\widetilde F)$, the inverse image by $\widetilde F$ preserves the Whitney (a)-regularity of every pair $(S', S'')$ of strata of $\cS$ which are inside $V(\widetilde G)\m \{0\}$, and such that $S''\subset \overline{S'}$.  But since the Whitney (a)-regularity may not be satisfied with respect to the strata of $\cW'$ inside $V(\widetilde F)$,
we need to refine the stratification $\cW'$ into a Whitney (a)-regular stratification $\cW''$ such that the pairs $(S,W)$ are Whitney (a)-regular, for any $S\in \cS$ and $W\in \cW''$, and such that $W\subset \overline{S}$.
   
Then the germ at $0\in \bR^{n}\times \bR$ of the stratification $\cQ := \cS \bigsqcup \cW''$ 
 is a  $\partial$-Thom stratification for $\widetilde H$.
This ends our proof.

\end{proof}

\begin{example}\label{ex:Thomcompo}(after \cite{AG})
Consider the maps germs $F : (\bR^3, 0) \to (\bR, 0)$, where $F(y, z, t) = y(y^2 + z^2+t^{2})$, and $G : (\bR^2, 0) \to (\bR, 0)$, where $G(y, t) = y^3 + t^2y$. It turns out that  $F$ satisfies the condition \eqref{eq:condfunct3} in a neighbourhood of the origin, whereas $G$ has a single singularity at the origin thus satisfies \eqref{eq:condfunct3} in a trivial manner.  (One can also check easily that condition \eqref{eq:cond2} is  satisfied too.) 

Then both $\widetilde F$ and $\widetilde G$ are $\partial$-Thom regular, by Theorem \ref{t:deformation}.  Since $\widetilde F$ has an isolated critical value, we may now directly apply our Theorem \ref{t:Thomcompo} to conclude that $\widetilde H=\widetilde G\circ \widetilde F$ is  $\partial$-Thom regular, and therefore $\widetilde H$ has a Milnor-Hamm fibration by Corollary \ref{c:milnorfib}. We observe that Theorem \ref{t:Thomcompo} provides a shortcut and we do not need to verify again the condition \eqref{eq:condfunct3} for the composition $H =  G\circ F$ as done in \cite[Example 3.7]{AG} in order to obtain a Milnor-Hamm fibration. 
\end{example}







\begin{thebibliography}{MMMM}

\bibitem[AG]{AG}
R.N. Ara\'ujo dos Santos, T. Gaffney \emph{Real integral closure and Milnor fibrations},
London Math. Soc. Lecture Note Ser., 380
Cambridge University Press, Cambridge, 2010, 146-157.


\bibitem[ART]{ART}
R.N. Ara\'ujo dos Santos, M. Ribeiro, M. Tib\u ar,  \emph{Fibrations of highly singular map germs.}
Bull. Sci. Math. 155 (2019), 92-111.

\bibitem[Br]{Br} E.~Brieskorn,
 \emph{Die Monodromie der isolierten Singularit\"{a}ten von  Hyperfl\"{a}chen}.
 Manuscripta Math. (1970), no. 2, 103-161.


\bibitem[CJT]{CJT}
Y. Chen, C. Joi\c ta, M. Tib\u ar, \emph{Fibrations of tamely composable maps}. J. Geometry and Physics 194, (2023),  Paper 105025.

\bibitem[dJ]{dJ}
T. de Jong, \emph{Some classes of line singularities.} Math. Z. 198 (1988),  493-517.


\bibitem[Fe]{Fe} 
J. Fern\'andez de Bobadilla, \emph{Relative morsification theory},  Topology 43 (2004), no. 4, 925-982.

\bibitem[FM]{FM} 
J. Fern\'andez de Bobadilla, M. Marco-Buzun\'ariz, \emph{Topology of hypersurface singularities with 3-dimensional critical set}, Comment. Math. Helv. 88 (2013), no. 2, 253-304.

\bibitem[HL]{HL} 
H.A. Hamm, L\^{e} D.T., \emph{Un th\' eor\`eme de Zariski du type de Lefschetz},
Ann. Sci. \' Ecole Norm. Sup. (4) 6 (1973), 317-355.

\bibitem[Hof]{Ho}
A. Hof, \emph{Milnor Fiber Consistency via Flatness},  \href{https://arxiv.org/abs/2212.12807}{arXiv:2212.12807}.

\bibitem[JiT]{JiT}
 G. Jiang, M. Tib\u ar, \emph{Splitting of singularities.} J. Math. Soc. Japan 54 (2002), no.2, 255-271.

\bibitem[JoT1]{JT}
C. Joi\c ta, M. Tib\u ar, \emph{Images of analytic map germs and singular fibrations},  European J. Math. 6 (2020), no 3, 888-904.

\bibitem[JoT2]{JT2}
C. Joi\c ta, M. Tib\u ar,  \emph{The local image problem for complex analytic maps},  Arkiv f\"or Matematik 59 (2021), 345-358.    

\bibitem[JoT3]{JT3}
C. Joi\c ta, M. Tib\u ar,  \emph{The image complexity of an analytic map germ}, \href{https://arxiv.org/abs/2403.09844}{arXiv:2403.09844}.

\bibitem[JST]{JST}
C. Joi\c ta, M. Stockinger, M. Tib\u ar, \emph{Tame deformations of highly singular function germs},  \href{https://arxiv.org/abs/2310.07340}{arXiv:2310.07340}.

\bibitem[Ma]{Ma} 
D. Massey, {\em Real analytic Milnor fibrations and a strong \L ojasiewicz inequality}, London Math. Soc. Lecture Note Ser. 380,  Cambridge University Press, Cambridge, 2010, 268-292.

\bibitem[MPT]{MPT}
L. Maxim, L. P\u aunescu, M. Tib\u ar,
\emph{The vanishing cohomology of non-isolated hypersurface singularities}, J. London Math. Soc. 106 (2022), no. 1, 112-153.    


\bibitem[Pa]{Pa}
A.  Parusi\'{n}ski,   \emph{Topological triviality of $\mu$-constant deformations of type $f(x) + tg(x)$}, 
Bull. London Math. Soc. 31 (1999), no. 6,  686-692. 

\bibitem[Pe]{Pe} 
R. Pellikaan, \emph{Deformations of hypersurfaces with a one-dimensional singular
locus}.  J.  Pure Appl. Algebra 67 (1990),  49-71.

\bibitem[Ra]{Ra}
P.J. Rabier, \emph{Ehresmann's fibrations and Palais-Smale conditions for morphisms
of Finsler manifolds}, Ann. of Math. 146 (1997), 647-691.

 \bibitem[Sc]{Sc}
 R. Schrauwen, 
\emph{Deformations and the Milnor number of nonisolated plane curve singularities}. Singularity theory and its applications, Part I (Coventry, 1988/1989), 276-291. Lecture Notes in Math., 1462
Springer-Verlag, Berlin, 1991.

 \bibitem[Si]{Si}
D. Siersma, \emph{Isolated line singularities.}  Singularities, Part 2 (Arcata, Calif., 1981),  485-496, Proc. Sympos. Pure Math., 40, Amer. Math. Soc., Providence, RI, 1983. 

 \bibitem[ST]{ST-mildefo} 
D. Siersma, M. Tib\u ar, \emph{Milnor fibre homology via deformation}, in:  Singularities and Computer Algebra,   Festschrift for Gert-Martin Greuel on the Occasion of his 70th Birthday,   pp. 306-322. Springer 2017.  

\bibitem[Za]{Za} A. Zaharia,  \emph{Topological properties of certain singularities with critical
locus a 2-dimensional complete intersection}. Topology Appl.
60 (1994), 153-171.

\end{thebibliography}
\end{document}